\def\normo#1{\left\|#1\right\|}
\def\aabs#1{\left|#1\right|}
\def\brk#1{\left(#1\right)}
\def\norm#1{\|#1\|}
\def\jb#1{\langle#1\rangle}
\def\wt#1{\widetilde{#1}}
\def\wh#1{\widehat{#1}}
\newcommand{\N}{{\mathbb N}}
\newcommand{\R}{{\mathbb R}}
\newcommand{\Z}{{\mathbb Z}}
\newcommand{\ft}{{\mathcal{F}}}
\newcommand{\les}{{\lesssim}}
\newcommand{\ges}{{\gtrsim}}
\newcommand{\supp}{{\mbox{supp}}}
\newcommand{\sgn}{{\mbox{sgn}}}
\newcommand{\Lr}{{\mathscr{L}}}
\def\jb#1{\langle#1\rangle}
\def\norm#1{\|#1\|}
\def\normo#1{\left\|#1\right\|}
\def\wt#1{\widetilde{#1}}
\def\wh#1{\widehat{#1}}
\def\aabs#1{\left|#1\right|}
\newcommand{\F}{\mathcal{F}}
\newcommand{\mA}{\mathbf{A}}
\newcommand{\mB}{\mathbf{B}}
\newcommand{\mG}{\mathbf{G}}
\newcommand{\mI}{\mathbf{I}}
\newcommand{\hess}{{\mathbf{H}}}
\newcommand{\al}{\alpha}
\newcommand{\be}{\beta}
\newcommand{\ga}{\gamma}
\newcommand{\de}{\delta}
\newcommand{\e}{\varepsilon}
\newcommand{\p}{\partial}
\newcommand{\EQ}[1]{\begin{equation}\begin{split} #1 \end{split}\end{equation}}
\newcommand{\EQN}[1]{\begin{equation*}\begin{split} #1 \end{split}\end{equation*}}
\newcommand{\Del}[1]{}
\numberwithin{equation}{section}
\newtheorem{thm}{Theorem}[section]
\newtheorem{cor}[thm]{Corollary}
\newtheorem{lem}[thm]{Lemma}
\newtheorem*{lem*}{Lemma}
\theoremstyle{remark}
\newtheorem{ex}{Example}
\newtheorem{theorem}{Theorem}[section]
\theoremstyle{remark}
  \newtheorem{remark}[theorem]{Remark}
\theoremstyle{definition}
\begin{document}

\title[Oscillatory integrals]{Uniform estimates for oscillatory integrals with parameter-dependent phases}
\author[Z. Guo]{Zihua Guo}

\address{School of Mathematics,  Monash University,  Clayton VIC 3800, Australia}
\email{zihua.guo@monash.edu}

\begin{abstract}
We consider the oscillatory integrals with parameter-dependent phases.  We decompose the integrals into a leading term and a remainder term.   Instead of the pointwise estimate,  we use some $L^p$-estimate for the remainder term and get various uniform estimates when the phase functions satisfy certain conditions.  This enables us to reduce the requirement of the smoothness on the phase functions,  and hence improve the results in \cite[Theorem 7.7.5]{Hormander} and also obtain a refined version of the well-known Van der Corput Lemma.   Some applications on the uniform expansion of the Bessel functions and dispersive estimates are also given.
\end{abstract}

\maketitle


\section{Introduction}

Consider the oscillatory
integral
\[I(\lambda)=\int_{\R^d} e^{i\lambda \phi(x)}\psi(x)dx, \quad \lambda\in \R^+\]
where $\psi\in C_0^\infty(\R^d)$ and the real-valued phase function $\phi\in C^\alpha$ on $\supp(\psi)$.   We are
interested in the behaviour of $I(\lambda)$ as $\lambda\to \infty$.  Due to the oscillation of the integrand, we expect $|I(\lambda)|$ decays as $\lambda\to \infty$.  For example, if the phase is non-stationary and smooth,  namely $\nabla \phi(x)\neq 0, \ x\in \supp (\psi)$, then one can use integration by part to get the decay estimate $|I(\lambda)|\les \lambda^{-N}$ for all $N\in \N$, see Stein \cite[Proposition 1 in Chapter VIII]{Stein}.  

On the other hand, in the case when the phase has stationary point 
(i.e., critical point such that $\nabla\phi(x)=0$), the oscillatory integral has been extensively studied. One approach is the method of stationary phase, that is, if $\phi$ has a non-degenerate critical point at $x_0$ and $\psi$ is sufficiently localized at $x_0$, then we have that (see Stein \cite[Proposition 6 in Chapter VIII]{Stein})
\begin{align}\label{eq:Ilambda}
\left|I(\lambda)-\frac{e^{i\lambda \phi(x_0)}\psi(x_0)(i2\pi)^{d/2}\lambda^{-d/2}}{(\det \hess \phi(x_0))^{1/2}}\right|\leq C_{\phi,\psi}|\lambda|^{\frac{-d-1}{2}}.
\end{align}
Here $\hess \phi(x_0)=[\frac{\p^2\phi}{\p x_i \p x_j}(x_0)]$ is the Hessian matrix of $\phi$ at $x_0$ and $C_{\phi,\psi}$ is a constant depending on $\phi,\psi$. Moreover, an asymptotic expansion for $I(\lambda)$ was given in \cite[Proposition 6 in Chapter VIII]{Stein}, however, the dependence of $C_{\phi,\psi}$ on $\phi,\psi$ was not explicitly stated.  

The oscillatory integral has a wide range of applications to areas such as partial differential equations,  special functions,  etc.   For example, it plays a crucial role in dispersive equations.  See \cite{KPV, decay}.   In many applications, $\phi$ may be parameter-dependent. In some situations $\phi$ may even depend on $\lambda$.  For example,  for some problems we need to get the decay in $\lambda$ of the oscillatory integral
\[\int e^{i\varphi(x,\lambda)}\psi(x)dx.\]
When the function $\varphi$ is inhomogeneous in $\lambda$,  we do not have a neat form $\varphi(x,\lambda)=\lambda^\theta \phi(x)$ for some $\phi$.  So we write it as $\varphi(x,\lambda)=\lambda \cdot \varphi(x,\lambda)\lambda^{-1}$, and apply the method of stationary phase to the phase function $\phi_\lambda=\varphi(x,\lambda)\lambda^{-1}$.  A typical example is the Bessel function (see \eqref{eq:Besselint}) whose main term is
\[J_\nu^M(r):=\int_{-\pi}^\pi e^{i(r\sin x-\nu
x)}dx.\]
The phase function $r\sin x-\nu x$ is not homogeneous in $r$ and depends on $\nu$. The asymptotic behaviour of $J_\nu^M(r)$ as $r\to \infty$ is crucial for many problems. Thus a clear dependence on $\phi$ and some uniform estimates of \eqref{eq:Ilambda} with respect to a family of phase functions are desired. 
On this regard, in one dimension a very useful tool is the Van der Corput lemma (see Stein \cite[Proposition 2 in Chapter VIII]{Stein}):
\begin{lem*}[Van der Corput]\label{lem:staph}
Assume $\phi$ is real-valued and smooth in $(a,b)$,  and
$|\phi^{(k)}(x)|\geq 1$ for all $x\in (a,b)$. Then
\[\aabs{\int_a^b e^{i\lambda \phi(x)}\psi(x)dx}\leq c_k \lambda^{-1/k}\bigg[|\psi(b)|+\int_a^b|\psi'(x)|dx\bigg]\]
holds when (1) $k\geq 2$, or (2) $k=1$ and $\phi'(x)$ is monotonic.
Here $c_k$ is a constant depending only on $k$.
\end{lem*}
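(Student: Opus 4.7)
My plan is to prove the estimate in two stages: first establish it for $\psi \equiv 1$ by induction on $k$, and then derive the general case by a single integration by parts in $\psi$, using the primitive $F(x) := \int_a^x e^{i\lambda \phi(s)}\,ds$.

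\emph{Stage 1, base case $k=1$.} With $\phi'$ monotonic and $|\phi'|\geq 1$, writing $e^{i\lambda\phi} = (i\lambda\phi')^{-1}(e^{i\lambda\phi})'$ and integrating by parts gives
\[\int_a^b e^{i\lambda\phi}\,dx = \left[\frac{e^{i\lambda\phi}}{i\lambda\phi'}\right]_a^b + \frac{1}{i\lambda}\int_a^b e^{i\lambda\phi}\,\frac{\phi''}{(\phi')^2}\,dx.\]
The boundary piece is at most $2/\lambda$. Since $\phi'$ is monotonic, $\phi''$ has constant sign, so $\phi''/(\phi')^2$ is the derivative of $-1/\phi'$ with constant sign, and the integral of its absolute value telescopes to $|1/\phi'(a)-1/\phi'(b)|\leq 2$. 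Hence $|\int_a^b e^{i\lambda\phi}\,dx| \leq 4\lambda^{-1}$.

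\emph{Stage 1, inductive step.} Assume the bound at level $k-1$ and suppose $|\phi^{(k)}|\geq 1$. Then $\phi^{(k-1)}$ is strictly monotonic and has at most one zero $c\in[a,b]$. Given $\delta>0$, on each of the (at most two) components of $(a,b)\setminus(c-\delta,c+\delta)$ we have $|\phi^{(k-1)}|\geq \delta$; the rescaled phase $\wt\phi:=\phi/\delta$ then satisfies $|\wt\phi^{(k-1)}|\geq 1$, and in the sub-case $k-1=1$ the derivative $\wt\phi' = \phi'/\delta$ is automatically monotonic because $\phi''$ has constant sign. Applying the inductive hypothesis with effective parameter $\lambda\delta$ bounds each such contribution by $C_{k-1}(\lambda\delta)^{-1/(k-1)}$, while the portion over $(c-\delta,c+\delta)$ is controlled trivially by $2\delta$. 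Balancing with $\delta=\lambda^{-1/k}$ yields $|\int_a^b e^{i\lambda\phi}\,dx|\leq C_k\lambda^{-1/k}$.

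\emph{Stage 2 and main obstacle.} Since the hypotheses on $\phi$ persist under restriction to any sub-interval $[a,x]$, Stage 1 delivers the uniform bound $\sup_{x\in[a,b]}|F(x)|\leq C_k\lambda^{-1/k}$. For arbitrary $\psi$, integration by parts in $\psi$ gives
\[\int_a^b e^{i\lambda\phi(x)}\psi(x)\,dx = F(b)\psi(b) - \int_a^b F(x)\psi'(x)\,dx,\]
and inserting the uniform bound on $F$ produces the stated inequality with $c_k = C_k$. The only genuinely delicate step is the inductive one: one must carefully check that the rescaled phase $\phi/\delta$ still satisfies all required hypotheses (including the $k=2$ monotonicity reduction) and that boundary effects near $c$, or the absence of $c$ in $(a,b)$, only improve the estimate. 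Both points follow from the sign-definiteness of $\phi^{(k)}$ together with scale-invariance of the hypothesis under $\phi\mapsto \phi/\delta$.
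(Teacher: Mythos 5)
The paper states this lemma with a citation to Stein's book and does not give a proof of its own. Your argument is the standard proof from Stein (to which the paper refers) — induction on $k$ for $\psi\equiv 1$, splitting a $\delta$-neighborhood around the zero (or near-vanishing endpoint) of $\phi^{(k-1)}$, balancing $\delta=\lambda^{-1/k}$, then a single integration by parts in $\psi$ using the uniform bound on $F(x)=\int_a^x e^{i\lambda\phi}$ — and it is correct, including the check that $\wt\phi'$ inherits monotonicity in the $k-1=1$ sub-case and the observation that the absence of a zero of $\phi^{(k-1)}$ in $(a,b)$ is handled by the same $\delta$-splitting at the endpoint.
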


This lemma is very convenient for applications. The assumption $|\phi^{(k)}(x)|\geq 1$ is a normalized condition which can be easily arranged in applications. A natural question is the analogue of the Van der Corput lemma in high dimensions. 
In \cite[Theorem 7.7.5]{Hormander}, it was proved that
\begin{align}\label{eq:Ilambda2}
\left|I(\lambda)-\frac{e^{i\lambda \phi(x_0)}\psi(x_0)(i2\pi)^{d/2}\lambda^{-d/2}}{(\det \hess \phi(x_0))^{1/2}}\right|\leq C(\norm{\phi}_{C^4})\norm{\psi}_{C^2}\lambda^{-1}.
\end{align}
The constants are explicitly tracked, but the bound $\lambda^{-1}$ is clearly not satisfactory for $d\geq 2$.  Recently, \cite{ABZ} tracked the constant in the following estimate:
\[|I(\lambda)|\leq \tilde C_{\phi,\psi}|\lambda|^{-d/2}\]
and proved 
\begin{align*}
\tilde C_{\phi,\psi}\leq C\frac{\brk{1+\sum\limits_{2\leq |\alpha|\leq d+2}\sup\limits_{x\in \supp\psi}|\p^\alpha \phi(x)|}^{d/2}}{\min\limits_{x\in \supp\psi}|\det \hess \phi(x)|}(\sum\limits_{|\alpha|\leq d+2}\sup\limits_{x\in \supp\psi}|\p^\alpha \psi(x)|).
\end{align*}
Compared to \eqref{eq:Ilambda}, the leading term does not appear in the above bound. However, for some problems we need to exploit not only the decay of $|I(\lambda)|$, but also the oscillation of $I(\lambda)$ in the leading term. Moreover, we can also exploit some cancellation such that the leading term vanishes, so that $I(\lambda)$ has better decay.  For example, the uniform decay and oscillation of the Bessel functions $J_\nu^M(r)$ as $r\to \infty$ (with respect to $\nu$) in Section \ref{subsect:Bessel} was crucial to prove the generalized Strichartz estimates by the author in \cite{GuoNon}. 

The purpose of this note is therefore to track the dependence of $C_{\phi,\psi}$ on $\phi,\psi$ in \eqref{eq:Ilambda}, which, in particular, provides more information of the oscillatory integral than the upper bound in \cite{ABZ}. 
We will prove a uniform version of \eqref{eq:Ilambda} under some normalized conditions inspired by the Van der Corput lemma.    Instead of the pointwise estimate,  we use some $L^p$-estimate for the remainder term and get various uniform estimates when the phase functions satisfy certain conditions.  This enables us to reduce the requirement of the smoothness on the phase functions,  and hence improve the results in \cite[Theorem 7.7.5]{Hormander} and also obtain a refined version of the well-known Van der Corput Lemma.   
We then present several applications of such estimates to partial differential equations and Bessel functions.

This note is organized as follows.  In Section 2, we present the one-dimensional estimate. The high-dimensional case will be presented in Section 3. Finally, in Section 4, we will apply these results to study the uniform estimates of the Bessel function and dispersive estimates in PDE.

\section{Uniform estimates for oscillatory integrals}

In this section we study $I(\lambda)$ under some normalized assumptions. These normalized conditions can be easily arranged in applications.  Let $B_r(\R^d)=\{x\in \R^d:|x|\leq r\}$. For function $f$ defined on $B_1$, $k\leq j\in \N=\{0,1,2,\cdots\}$, we define the following norms on $f$:
\[S_k(f)=\sum_{|\al|=k}\sup_{x\in B_1}|\p^\alpha f(x)|,\quad S_{[k,j]}(f)=\sum_{k\leq l\leq j}S_l(f).\]
For function $f$,  $\hat f$ or $\ft f$ denotes its Fourier transform 
\begin{align}\label{eq:Fourier}
\hat{f}(\xi)=(\F f )(\xi)=\frac{1}{(2\pi)^{d/2}}\int_{{\R}^d} e^{- ix\cdot \xi}
f(x)dx, \quad \xi\in \R^d.
\end{align}
Then $\ft^{-1} f=\F f(-\xi)$.   $A\les B$ means $A\leq CB$ for some universal constant $C$ depending only on dimension $d$. $A\sim B$ means $A\les B$ and $B\les A$.

\subsection{One-dimensional estimates}
In this subsection we first consider the one dimensional case.  For this case it is relatively easy to track the constant in \eqref{eq:Ilambda}.  We have

\begin{thm}\label{thm1}
Assume $d=1$, $\psi$ is supported in $B_1$,  and $\phi$ satisfies the following two assumptions\\
(H1): $\phi(0)=\phi'(0)=0$; \\
(H2): $\frac{1}{10}\leq \phi''(x)\leq 10$, if $|x|\leq 1$.\\
Then we have
\[I(\lambda)=(2\pi)^{1/2}\lambda^{-1/2}\frac{\psi(0)e^{i\pi/4}}{[\phi''(0)]^{1/2}}+R(\lambda)\]
and $R(\lambda)$ satisfies the following estimates:  
\begin{itemize}
\item[(1)] For $p\in [1,\infty)$,  there exists a constant $C$ independent of $\phi, \psi, \lambda$ such
\[|R(\lambda)|\leq C \lambda^{-1+\frac{1}{2p}}\brk{\norm{\phi'''}_{L^p(B_1)}\norm{\psi}_{L^\infty(B_1)}+\norm{\psi'}_{L^\infty(B_1)}}.\]

\item[(2)] $|R(\lambda)| \les  \lambda^{-3/2} C(S_{[2,4]}(\phi), S_{[0,2]}(\psi)).$

\item[(3)] If assuming further $|\psi^{(k)}(x)|\leq C_k$ and $|\phi^{(k)}(x)|\leq
C_k$ for $|x|<1$, $k\in \N$, then for any $K\in \N$ we have
\[R(\lambda)=(2\pi)^{1/2}\lambda^{-1/2}e^{i\pi/4}\sum_{k=1}^K\frac{a_k\lambda^{-k}}{k!}+\tilde R(\lambda)\]
where $|a_k|\leq C^k$ and $|\tilde R(\lambda)|\leq
C_K\lambda^{-K-3/2}$ with $C,C_K$ independent of
$\phi,\lambda$.
\end{itemize}

\end{thm}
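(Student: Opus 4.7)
The plan is to reduce $I(\lambda)$ to an integral with a pure quadratic phase via the Morse lemma, and then apply Plancherel. By Hadamard's lemma and (H1)--(H2), write $\phi(x) = y(x)^2/2$ where $y(x) = xh(x)$ with $h(x)^2 = 2\int_0^1(1-t)\phi''(tx)\,dt$; the bound $1/10 \le \phi'' \le 10$ makes $y$ a $C^1$-diffeomorphism from a neighbourhood of $B_1$ onto its image, with $y'(0) = \sqrt{\phi''(0)}$. After the change of variables,
\[
I(\lambda) = \int e^{i\lambda y^2/2}\Psi(y)\,dy,\qquad \Psi(y) = \psi(x(y))\,x'(y),
\]
where $\Psi$ is compactly supported and $\Psi(0) = \psi(0)/\sqrt{\phi''(0)}$.

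By completing the square, $\F[e^{i\lambda y^2/2}](\xi) = \lambda^{-1/2}e^{i\pi/4}e^{-i\xi^2/(2\lambda)}$, so Plancherel gives $I(\lambda) = \lambda^{-1/2}e^{i\pi/4}\int\hat\Psi(\xi)\,e^{-i\xi^2/(2\lambda)}\,d\xi$. Writing $e^{-i\xi^2/(2\lambda)} = 1 + (e^{-i\xi^2/(2\lambda)} - 1)$ and using $\int\hat\Psi\,d\xi = (2\pi)^{1/2}\Psi(0)$ extracts the leading term and leaves
\[
R(\lambda) = \lambda^{-1/2}e^{i\pi/4}\int\hat\Psi(\xi)\,(e^{-i\xi^2/(2\lambda)} - 1)\,d\xi.
\]

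For part~(1), I split the $\xi$-integral at $|\xi| = \sqrt\lambda$. On $|\xi| > \sqrt\lambda$, I use $|e^{-i\xi^2/(2\lambda)} - 1|\le 2$ with Cauchy--Schwarz and Plancherel. On $|\xi| < \sqrt\lambda$, I use $|e^{-i\xi^2/(2\lambda)} - 1|\le \xi^2/(2\lambda)$, rewrite $\xi\hat\Psi = -i\widehat{\Psi'}$, and apply H\"older together with the Hausdorff--Young inequality to obtain $\les \lambda^{-1+1/(2p)}\|\Psi'\|_{L^p}$. Since $\Psi'(y) = \psi'(x)/y'(x)^2 - \psi(x)y''(x)/y'(x)^3$, it remains to bound $\|y''\|_{L^p}$ by $\|\phi'''\|_{L^p}$. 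Differentiating $y^2 = 2\phi$ twice gives $y'' = (2\phi''\phi - (\phi')^2)/(2\phi)^{3/2}$; the key observation is that $(2\phi''\phi - (\phi')^2)' = 2\phi'''\phi$ and this quantity vanishes at $x = 0$, yielding the crucial identity
\[
y''(x) = \frac{2\int_0^x\phi'''(s)\phi(s)\,ds}{(2\phi(x))^{3/2}},
\]
from which Minkowski's inequality gives $\|y''\|_{L^p(B_1)}\les \|\phi'''\|_{L^p(B_1)}$ \emph{without} any hypothesis on $\phi^{(4)}$.

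Parts~(2) and~(3) then follow by iterating the Fourier-side analysis. For~(2), the next-order expansion $e^{-i\xi^2/(2\lambda)} - 1 = -i\xi^2/(2\lambda) + O(\xi^4/\lambda^2)$ produces an additional correction at order $\lambda^{-3/2}$ proportional to $\Psi''(0)$; controlling the new remainder uses $\|\Psi''\|_\infty$, which this time legitimately requires $\phi^{(4)}$, matching the hypotheses $S_{[2,4]}(\phi)$ and $S_{[0,2]}(\psi)$. For~(3), Taylor-expanding $e^{-i\xi^2/(2\lambda)}$ to order $K$ and pairing each $\xi^{2k}\hat\Psi$ with $\Psi^{(2k)}(0)$ gives the asymptotic expansion, with $|a_k|\le C^k$ coming from the uniform bounds on all derivatives and $|\tilde R(\lambda)|\les \lambda^{-K-3/2}$ from the analogous low-/high-frequency splitting applied to the tail. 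The principal obstacle is the identity for $y''(x)$ above, which is what preserves the $\phi'''$-regularity in part~(1); verifying it cleanly and tracking the $L^p$-bounds near the vanishing denominator $(2\phi(x))^{3/2}$ at $x = 0$ is the delicate step, after which all Fourier-side estimates are routine, modulo a further refinement (e.g.\ exploiting the compact support of $\Psi'$) needed to reach the sharp exponent $\lambda^{-1+1/(2p)}$ for $p > 2$.
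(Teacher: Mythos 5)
Your overall strategy matches the paper's: change of variables $y=\pm\sqrt{2\phi(x)}$ to reduce to a pure quadratic phase, apply Plancherel, and peel off the leading term via $e^{-i\xi^2/(2\lambda)}=1+(e^{-i\xi^2/(2\lambda)}-1)$. The algebraic identity $(2\phi''\phi-(\phi')^2)'=2\phi'''\phi$, which isolates $\phi'''$ in $y''$ without invoking $\phi^{(4)}$, is a clean alternative to the paper's direct differentiation of $\partial_y x=\big(\int_0^1\phi''(tx)\,dt\big)^{-1}\big(2\int_0^1(1-t)\phi''(tx)\,dt\big)^{1/2}$; both routes produce $\|y''\|_{L^p}\les\|\phi'''\|_{L^p}$ via Minkowski's integral inequality.

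The genuine gap is in part (1). Your split at $|\xi|=\sqrt\lambda$ together with Hausdorff--Young (to pass from $\|\widehat{\Psi'}\|_{p'}$ to $\|\Psi'\|_p$) works only for $1<p\le 2$: for $p>2$ Hausdorff--Young goes the wrong way, and at $p=1$ the high-frequency piece $\int_{|\xi|>\sqrt\lambda}|\widehat{\Psi'}(\xi)|/|\xi|\,d\xi$ carries the divergent weight $\int_{|\xi|>\sqrt\lambda}|\xi|^{-1}\,d\xi$. You flag the $p>2$ case, but the suggested fix (compact support of $\Psi'$) only gives $\|\Psi'\|_2\les\|\Psi'\|_p$ and hence the weaker decay $\lambda^{-3/4}$ rather than $\lambda^{-1+1/(2p)}$. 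The paper sidesteps all this with a duality argument: it sets $h(\xi)=(e^{-i\xi^2/2}-1)/(i\xi)$ and $h_\lambda(\xi)=h(\lambda^{-1/2}\xi)$, rewrites $R(\lambda)=\lambda^{-1}e^{i\pi/4}\int h_\lambda\,\widehat{\partial_y u}\,d\xi$, applies Parseval to pair $\widehat{h_\lambda}$ against $\partial_y u$ directly, and establishes the pointwise bound $|\hat h(x)|\les\min(1,|x|^{-1})$ by splitting the $\xi$-integral for $\hat h$ and applying Van der Corput to each piece. Scaling then gives $\|\widehat{h_\lambda}\|_{p'}\les\lambda^{1/(2p)}$ for every $p'\in(1,\infty]$, covering all $p\in[1,\infty)$ in one stroke. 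This kernel estimate is the ingredient your proposal is missing; parts (2) and (3) otherwise track the paper's Taylor-expansion argument (with $\tilde h(\xi)=(e^{-i\xi^2/2}-1)/(i\xi^2)$ in place of $h$ for part (2)) essentially verbatim.
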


\begin{proof}
First we use some arguments in \cite[Theorem 3.11 in Chapter 3]{Zwor}.
Since $\phi'(x)=x\cdot \int_0^1\phi''(tx)dt$,  then we have
$\phi'(x)\sim x$ if $0<|x|<1$.  We can write
$\phi(x)=\frac{1}{2}\al(x)x^2$ with
$\al(x)=2\int_0^1(1-t)\phi''(tx)dt$.  By assumption (2) we know $\al(x)\sim 1$
if $|x|<1$.  Then
\EQ{
I(\lambda)=\int e^{i\lambda \alpha(x)x^2/2}\psi(x)dx.
}
Make a change of variables
\[y:=\al(x)^{1/2}x=\pm \sqrt{2\phi(x)}, \quad \pm x>0.\]
We see that $\p_y x=\frac{x}{\phi'(x)}\al(x)^{1/2}=\frac{y}{\phi'(x)}\sim 1$ if
$|x|<1$ and $x_y(0)=\frac{1}{[\phi''(0)]^{1/2}}$. So it determines
uniquely a function $x=x(y)$.  
Using the equality
\[\p_yx=\frac{x}{\phi'(x)}\al(x)^{1/2}=(\int_0^1\phi''(tx)dt)^{-1}(2\int_0^1(1-t)\phi''(tx)dt)^{1/2},\]
we can get for $|x|\leq 1$
\EQ{\label{eq:intxyy}
|\partial_y^2x|\les \int_0^1 t|\phi'''(tx)| \partial_yx dt+\int_0^1(1-t) t|\phi'''(tx)| \partial_yx dt.
}
We have
\[I(\lambda)=\int e^{\frac{i\lambda y^2}{2}}\psi(x(y))x_y(y)dy.\]
Let $u(y)=\psi(x(y))x_y(y)$.  Then $I(\lambda)=\jb{u, e^{-\frac{i\lambda
y^2}{2}}}$. Using Parseval's formula and the fact $\ft (e^{-\frac{i\lambda
y^2}{2}})=\lambda^{-1/2}e^{-\frac{i\pi}{4}}e^{\frac{i\xi^2}{2\lambda}}$,
we get
\begin{align*}
I(\lambda)=&\lambda^{-1/2}e^{\frac{i\pi}{4}}\int
e^{-\frac{i\xi^2}{2\lambda}}\hat u(\xi)d\xi\\
=&\lambda^{-1/2}e^{\frac{i\pi}{4}}\int
\hat u(\xi)d\xi+{\lambda^{-1/2}e^{\frac{i\pi}{4}}}\int
{(e^{-\frac{i\xi^2}{2\lambda}}-1)}\hat
u(\xi)d\xi\\
=&(2\pi)^{1/2}
\lambda^{-1/2}e^{\frac{i\pi}{4}}\frac{\psi(0)}{[\phi''(0)]^{1/2}}+{\lambda^{-1/2}e^{\frac{i\pi}{4}}}\int
{(e^{-\frac{i\xi^2}{2\lambda}}-1)}\hat
u(\xi)d\xi\\
:=&(2\pi)^{1/2}
\lambda^{-1/2}e^{\frac{i\pi}{4}}\frac{\psi(0)}{[\phi''(0)]^{1/2}}+R(\lambda).
\end{align*}

Now we estimate the remainder term $R(\lambda)$ by different arguments from \cite{Zwor}.   First we show part (1) in the conclusion.  We can rewrite
\EQN{
R(\lambda)=&\lambda^{-1}e^{\frac{i\pi}{4}}\int
\frac{(e^{-\frac{i\xi^2}{2\lambda}}-1)\lambda^{1/2}}{i\xi}(i\xi)\hat
u(\xi)d\xi\\
=&\lambda^{-1}e^{\frac{i\pi}{4}}\int
\frac{(e^{-\frac{i\xi^2}{2\lambda}}-1)\lambda^{1/2}}{i\xi}\widehat
{\partial_y u}(\xi)d\xi.
}
Let $h(\xi)=\frac{e^{-i\xi^2/2}-1}{i\xi}$ and $h_\lambda(\xi)=h(\xi\lambda^{-1/2})$.   We have $h, \hat h\in L^p$ for $1<p\leq \infty$.   Indeed, 
\EQN{
\hat h(x)=&\int \frac{e^{-i\xi^2/2}-1}{i\xi}\cdot e^{-ix\xi} d\xi\\
=&\brk{\int_{|\xi|\les 1}+\int_{1\les |\xi|\les |x|}+\int_{|\xi|\gg \max(|x|,1)}}\frac{e^{-i\xi^2/2}-1}{i\xi}\cdot e^{-ix\xi} d\xi\\
:=&h_1(x)+h_2(x)+h_3(x).
}
Clearly we have $|h_1(x)|\les (1+x^2)^{-K/2}$ for any $K\in \N$ (We use a smooth cut-off for $|\xi|\les 1$).    By the Van der Corput lemma we have $|h_2(x)|+|h_3(x)|\les |x|^{-1}$ for $|x|\geq 1$.  For $|x|\leq 1$,  we have 
\EQN{
|h_3(x)|\les \aabs{\int_{|\xi|\gg 1}\frac{e^{-i(\frac{\xi^2}{2}+x\xi)}}{\xi} d\xi}+ \aabs{\int_{|\xi|\gg 1}\frac{e^{-ix\xi}}{\xi} d\xi}\les 1.
}
Thus,   we get for $p\in [1,\infty)$
\EQ{
|R(\lambda)|\les& \lambda^{-1} \norm{\widehat{h_\lambda}}_{p'}\norm{\partial_y u}_p\les \lambda^{-1} \lambda^{\frac{1}{2p}}\norm{\partial_y u}_p\\
\les& \lambda^{-1+\frac{1}{2p}}\brk{\norm{\phi'''}_{L^p(B_1)}\norm{\psi}_{L^\infty(B_1)}+\norm{\psi'}_{L^\infty(B_1)}},
}
where we used  \eqref{eq:intxyy}.

So far,  we only used the third derivative on the phase function $\phi$.   If we have estimates on higher derivative, then we have better estimates on $R(\lambda)$.
Now we prove part (2) in the conclusion.  Note that
\begin{align}\label{eq:xybound}
|\p_y^{k}(x)|\leq C(S_{[2,k+1]}(\phi)),\quad  k\geq 1.
\end{align}
Then we can also rewrite $R(\lambda)$ as
\EQN{
R(\lambda)=\frac{(2\pi
\lambda^{-1})^{1/2}e^{\frac{i\pi}{4}}}{2\lambda}\int
\frac{(e^{-\frac{i\xi^2}{2\lambda}}-1)2\lambda}{-i\xi^2}(-i\xi^2)\hat
u(\xi)d\xi.
}
Let $\tilde h(\xi)=\frac{e^{-{i\xi^2/2}}-1}{i\xi^2}$ and $\tilde h_\lambda(\xi)=\tilde h(\xi\lambda^{-1/2})$.   We have $|\ft \tilde h(x)|\les (1+|x|)^{-2}$ and hence $\ft \tilde h\in L^1$.  
By \eqref{eq:xybound} we get
\EQ{
|R(\lambda)|\les& \lambda^{-3/2} \norm{\widehat{\tilde h_\lambda}}_{1}\norm{\partial^2_y u}_\infty \les  \lambda^{-3/2} C(S_{[2,4]}(\phi), S_{[0,2]}(\psi)).
}

For part (3),  if $|\psi^{(k)}(x)|\les 1$ and $|\phi^{(k)}(x)|\les 1$ for
$|x|<1$, $k\in \N$,  then $|\p_y^{k}(x)|\les 1$ holds for $k\in \N$.
By the Taylor's expansion
$e^{-\frac{i\xi^2}{2\lambda}}=\sum_{k=0}^\infty
\frac{(-i\xi^2/(2\lambda))^k}{k!}$, we can prove the expansion for
$R(\lambda)$ with
\[a_k=2^{-k}i^k\p^{2k}u(0).\]
We complete the proof of the lemma.
\end{proof}

\begin{remark}
(1) The estimate of part (1) and (2) on $R(\lambda)$ improves the results in \cite[Theorem 7.7.5]{Hormander}.   The estimate of part (1) relies only on the third derivative and gives almost the same decay (e.g. $p=\infty$).   Assuming the bound on fourth derivative,  then we improve the decay rate $\lambda^{-1}$ to $\lambda^{-3/2}$.    This can be replaced by second derivative.  Indeed,  we have
\EQ{
\partial_y^2 x=\frac{1}{\phi'(x)}-\frac{y \phi''(x)x_y}{\phi'(x)^2}=\frac{1}{\phi'(x)}\brk{1-\frac{2\phi(x)\phi''(x)}{\phi'(x)^2}}.
}
So in part (1) of Theorem \ref{thm1},  $\norm{\phi'''}_{L^p(B_1)}$ can be replaced by $\norm{\frac{1}{\phi'(x)}\brk{1-\frac{2\phi(x)\phi''(x)}{\phi'(x)^2}}}_{L^p(B_1)}$.

(2) If $\phi$ depends on $\lambda$, $\psi$ is independent of $\lambda$,
then $x=x(y,\lambda)$, and
\[\p_\lambda x=-\frac{\p_\lambda \phi}{\p_x\phi}, \quad \p_{y\lambda }^2x=-\frac{y\p_x^2\phi\p_\lambda \phi}{(\p_x\phi)^3}.\]
Then we get
\[|\p_\lambda R(\lambda)|\les \lambda^{-5/2}+\lambda^{-3/2}\sup_{|x|\les 1}\frac{|\p_\lambda \phi|}{|x|^2}.\]

(3) We can extend Theorem \ref{thm1}  to general case.  If
$\phi$ satisfies $\phi'(x_0)=0$ for some $x_0\in \supp (\psi)$ and
$\phi'(x)\ne 0$ for $x_0\ne x\in \supp (\psi)$, then under suitable
conditions
\[I(\lambda)=(2\pi\lambda^{-1})^{1/2}|\phi''(x_0)|^{-1/2}e^{\frac{i\pi}{4}\sgn \phi''(x_0)}e^{i\lambda \phi(x_0)}\psi(x_0)+\tilde R(\lambda).\]
$\tilde R(\lambda)$ satisfies similar estimates as $R(\lambda)$.
\end{remark}

Theorem \ref{thm1} gives a refined version of Van der Corput lemma in the case $k=2$. We can extend it to general case $k\geq 3$.

\begin{thm}\label{thm1.2}
Assume $d=1$, $k\geq 2$, $\psi$ is supported in $B_1$, and $\phi$ satisfies\\
(H1) $\phi(0)=\phi'(0)=\cdots =\phi^{(k)}=0$; \\
(H2) $\frac{1}{10}\leq \phi^{(k+1)}(x)\leq 10$, if $|x|\leq 1$; \\
Then
\[I(\lambda)=\lambda^{-\frac{1}{k+1}}\frac{c_k\psi(0)Ai_k(0)}{[\phi^{(k+1)}(0)]^{\frac{1}{1+k}}}+R(\lambda)\]
where $Ai_k(\xi)=\ft^{-1}[e^{ix^{k+1}}](\xi)$,  $c_k$ is given by \eqref{eq:ck} and for $p\in (1,2]$
\EQ{|R(\lambda)|\les& \lambda^{-\frac{1}{1+k}} \lambda^{-\frac{1}{(1+k)p'}} \brk{\norm{\phi^{(k+2)}}_{L^p(B_1)}\norm{\psi}_{L^\infty(B_1)}+\norm{\psi'}_{L^\infty(B_1)}}.}
\end{thm}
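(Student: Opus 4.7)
The plan is to repeat the strategy of Theorem \ref{thm1}, replacing the quadratic normal form $\tfrac12\phi''(0)x^2$ with the degenerate normal form of order $k+1$. Taylor's formula gives $\phi(x) = \beta(x)x^{k+1}$ with
$$\beta(x) = \frac{1}{k!}\int_0^1 (1-t)^k \phi^{(k+1)}(tx)\,dt,$$
so hypothesis (H2) ensures $\beta(x) \sim 1$ on $B_1$. The change of variable $y = \beta(x)^{1/(k+1)} x$ is then a smooth diffeomorphism near $0$ with $x_y(0) = [(k+1)!/\phi^{(k+1)}(0)]^{1/(k+1)}$, and it brings $I(\lambda)$ into the model form
$$I(\lambda) = \int e^{i\lambda y^{k+1}}u(y)\,dy, \qquad u(y) = \psi(x(y))\,x_y(y).$$
Differentiating the identity $(k+1)y^k = \phi'(x)x_y$ and tracking the $t$-factor that appears in $\beta'$ (exactly as in the derivation of \eqref{eq:intxyy}) yields the required $L^p$ bound
$$\normo{\p_y u}_{L^p(\R)} \les \normo{\phi^{(k+2)}}_{L^p(B_1)}\normo{\psi}_{L^\infty(B_1)} + \normo{\psi'}_{L^\infty(B_1)}.$$

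Next I would apply Parseval as in Theorem \ref{thm1}. Setting $g(y) = e^{-iy^{k+1}}$ and $A(\eta) := \overline{\hat g(\eta)}$ (up to conjugation the generalized Airy function attached to $y^{k+1}$, with $A(0) = Ai_k(0)$), the scaling identity
$$\ft[e^{-i\lambda y^{k+1}}](\xi) = \lambda^{-1/(k+1)}\,\hat g(\xi\lambda^{-1/(k+1)})$$
gives $I(\lambda) = \lambda^{-1/(k+1)}\int \hat u(\xi)\, A(\xi\lambda^{-1/(k+1)})\,d\xi$. Splitting $A(\eta) = A(0) + \eta B(\eta)$ with $B(\eta) = \int_0^1 A'(t\eta)\,dt$, the $A(0)$ contribution combined with $\int \hat u(\xi)\,d\xi = (2\pi)^{1/2}u(0)$ produces precisely the claimed leading term, identifying $c_k = (2\pi)^{1/2}[(k+1)!]^{1/(k+1)}$ in \eqref{eq:ck}.

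The remainder then becomes
$$R(\lambda) = \lambda^{-2/(k+1)}\int \xi\,\hat u(\xi)\, B(\xi\lambda^{-1/(k+1)})\,d\xi = -i\lambda^{-2/(k+1)}\int \wh{\p_y u}(\xi)\, B_\lambda(\xi)\,d\xi,$$
with $B_\lambda(\xi) = B(\xi\lambda^{-1/(k+1)})$. Parseval and H\"older yield $|R(\lambda)| \les \lambda^{-2/(k+1)}\normo{\wh{B_\lambda}}_{p'}\normo{\p_y u}_p$, while the dilation identity $\normo{\wh{B_\lambda}}_{p'} = \lambda^{1/((k+1)p)}\normo{\wh B}_{p'}$ makes the $\lambda$-exponents add up to exactly $-1/(k+1) - 1/((k+1)p')$. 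Inserting the $L^p$-bound on $\p_y u$ above gives the stated estimate.

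The main obstacle is the uniform bound $\normo{\wh B}_{p'} < \infty$ for $p' \in [2,\infty)$. I would establish this by applying Van der Corput to the phase $\eta y + y^{k+1}$, whose $(k+1)$-th derivative is the nonzero constant $(k+1)!$; this yields boundedness plus algebraic decay of $A$, and hence $B \in L^2 \cap L^\infty$. The conclusion $\wh B \in L^{p'}$ then follows by interpolating between Plancherel and Hausdorff--Young. This is precisely why the statement is restricted to $p \in (1,2]$.
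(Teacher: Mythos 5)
Your approach mirrors the paper's proof step by step: Taylor-expand $\phi(x)=\alpha_k(x)x^{k+1}$, change variables $y=\alpha_k(x)^{1/(k+1)}x$ so that the phase becomes $y^{k+1}$, apply Parseval to peel off the leading $Ai_k(0)\,u(0)$ term, and estimate the remainder by duality against a $\lambda$-dilate of the differenced kernel $B(\eta)=(A(\eta)-A(0))/\eta$ (the paper's $h$). Up to that point the two arguments coincide, and your $c_k=(2\pi)^{1/2}[(k+1)!]^{1/(k+1)}$ is the value consistent with $x_y(0)=[(k+1)!]^{1/(k+1)}[\phi^{(k+1)}(0)]^{-1/(k+1)}$ (the displayed formula \eqref{eq:ck} and the theorem statement carry a small constant mismatch).

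The gap is in the last step. You assert $B\in L^2\cap L^\infty$ and conclude $\widehat B\in L^{p'}$ for $p'\in[2,\infty)$ ``by interpolating between Plancherel and Hausdorff--Young.'' That inference does not hold: Hausdorff--Young gives $\norm{\widehat f}_{p'}\les\norm{f}_p$ only for $p\in[1,2]$, so to reach $p'>2$ you need $B\in L^p$ with $p<2$, and $L^2\cap L^\infty\subset L^q$ only for $q\geq 2$; there is no interpolation statement that upgrades $B\in L^2\cap L^\infty$ to $\widehat B\in L^{p'}$ with $p'>2$. More importantly, the reasoning you offer (``boundedness plus algebraic decay of $A$'') controls only the tail of $B$, giving $|B(\eta)|\les 1/|\eta|$ for $|\eta|\geq 1$; it says nothing about $B$ near $\eta=0$, which is exactly where $L^p$ membership for $p$ close to $1$ (and indeed your claimed $L^\infty$, and even $L^2$) is decided. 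The paper proves precisely the missing near-zero estimate, $|h(\eta)|\les|\eta|^{-1/(k+1)}$ for $|\eta|<1$, by splitting the oscillatory integral defining $h$ at $|x|\sim|\eta|^{-1/(k+1)}$ and applying Van der Corput of order $k+1$ on the inner region and order $1$ on the outer; combined with the $1/|\eta|$ tail this gives $h\in L^p$ for all $p\in(1,k+1)\supset(1,2]$, after which Hausdorff--Young finishes. To repair your argument, replace the $L^2\cap L^\infty$ claim with such a near-zero bound on $B$ (or justify that $Ai_k$ is differentiable at $0$ so that $B$ is bounded there), and then invoke Hausdorff--Young with $p\in(1,2]$ directly.
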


\begin{proof}
Let $P_0(t)=1$.  We define $P_{k}(t)=\int_t^1 P_{k-1}(s)ds$ for $k\geq 1$. Then $P_k(t)\geq 0$ for $t\in [0,1]$ and
\EQ{
\phi(x)=&\int_0^1 P_k(t)\phi^{(k+1)}(tx)dt\cdot x^{k+1}:=\alpha_k(x)x^{k+1},\\
\phi'(x)=&\int_0^1 P_{k-1}(t)\phi^{(k+1)}(tx)dt\cdot x^{k}:=\beta_k(x)x^{k}.
}
By assumption (2) we know $\al_k(x)\sim \beta_k(x)\sim 1$
if $|x|<1$.  Then
$I(\lambda)=\int e^{i\lambda \alpha_k(x)x^{k+1}}\psi(x)dx.$
Make a change of variables
\[y:=\al(x)^{\frac{1}{k+1}}x=\pm \phi(x)^{\frac{1}{k+1}}, \quad \pm x>0.\]
We see that 
\EQ{
\p_y x=(k+1)\frac{\phi(x)^{\frac{k}{k+1}}}{\phi'(x)}\sim 1, \quad |x|<1,
}
and $x_y(0)=c_k[\phi^{(k+1)}(0)]^{-\frac{1}{1+k}}$ with
\EQ{\label{eq:ck}
c_k=(k+1)\brk{\int_0^1 P_{k-1}(t)dt}^{-\frac{1}{1+k}}.
}
So it determines
uniquely a function $x=x(y)$.  As before, we can get
\EQ{
\norm{\partial_y^2 x}_{L^p}\les \norm{\phi^{(k+2)}}_{L^p(B_1)}.
}
We have
\[I(\lambda)=\int e^{i\lambda y^{k+1}}\psi(x(y))x_y(y)dy.\]
Let $u(y)=\psi(x(y))x_y(y)$. Using Parseval's formula we have
we get
\begin{align*}
I(\lambda)=&\lambda^{-\frac{1}{k+1}}\int
Ai_k(\lambda^{-\frac{1}{k+1}} \xi)\hat u(\xi)d\xi\\
=&(2\pi)^{1/2}\lambda^{-\frac{1}{k+1}}Ai_k(0)u(0)+\lambda^{-\frac{1}{k+1}}\int
[Ai_k(\lambda^{-\frac{1}{k+1}} \xi)-Ai_k(0)]\hat
u(\xi)d\xi\\
=&(2\pi)^{1/2}\lambda^{-\frac{1}{k+1}}\frac{c_k\psi(0)Ai_k(0)}{[\phi^{(k+1)}(0)]^{\frac{1}{1+k}}}+R(\lambda)
\end{align*}
where $R(\lambda)=\lambda^{-\frac{1}{k+1}}\int
[Ai_k(\lambda^{-\frac{1}{k+1}} \xi)-Ai_k(0)]\hat
u(\xi)d\xi$.

Now we estimate $R(\lambda)$. Let $h(\xi)=\frac{Ai_k(\xi)-Ai_k(0)}{\xi}$. We first show $h\in L^p$ for $p\in (1,2]$. Since $|Ai(\xi)|\leq C$, we only need to show $|h(\xi)|\les |\xi|^{-\frac{1}{1+k}}$ when $|\xi|<1$. Indeed, for $|\xi|<1$ we have
\EQN{
h(\xi)=&(2\pi)^{-1/2}\int e^{ix^{k+1}}\frac{e^{ix\xi}-1}{\xi}dx\\
=&(2\pi)^{-1/2}\int_{|x|\leq |\xi|^{-\frac{1}{k+1}}} e^{ix^{k+1}}\frac{e^{ix\xi}-1}{\xi}dx
+(2\pi)^{-1/2}\int_{|x|\gg |\xi|^{-\frac{1}{k+1}}} e^{ix^{k+1}}\frac{e^{ix\xi}-1}{\xi}dx\\
:=&h_1(\xi)+h_2(\xi).
}
For the first term, by the Van der Corput lemma (for $k+1$-th derivatives), we have $|h_1(\xi)|\les |\xi|^{-\frac{1}{1+k}}$. For the second term, by the Van der Corput lemma (for the first derivative), we have
\EQ{
|h_2(\xi)|\les |\xi|^{\frac{k}{1+k}}|\xi|^{-1}\les |\xi|^{-\frac{1}{1+k}}.
}
Therefore, letting $h_\lambda(\xi)=h(\lambda^{-\frac{1}{1+k}}\xi)$ we have
\EQ{
|R(\lambda)|\les& \lambda^{-\frac{2}{1+k}} \int h_\lambda(\xi) \xi \hat u(\xi)d\xi\\
\les&\lambda^{-\frac{2}{1+k}} \norm{\widehat{h_\lambda}}_{p'}\norm{\partial_y u}_p\les \lambda^{-\frac{2}{1+k}} \lambda^{\frac{1}{(1+k)p}}\norm{\partial_y u}_p\\
\les& \lambda^{-\frac{1}{1+k}} \lambda^{-\frac{1}{(1+k)p'}} \brk{\int |\psi(x(y))x_{yy}(y)|^pdy+\int |\psi'(x(y))x^2_{y}(y)|^pdy}^{1/p}\\
\les& \lambda^{-\frac{1}{1+k}} \lambda^{-\frac{1}{(1+k)p'}}  \brk{\norm{\phi^{(k+2)}}_{L^p(B_1)}\norm{\psi}_{L^\infty(B_1)}+\norm{\psi'}_{L^\infty(B_1)}}.
}
The proof is complete. 
\end{proof}

Next we consider the high-dimensional cases.  In contrast to the one-dimensional case, 
it is difficult to place the normalized conditions. We can only normalize one condition.  An analogue of the Van der Corput lemma in high dimensions may be complicated. We only consider the non-degenerate case, namely at the critical point, the hessian is non-degenerate.

\subsection{High-dimensional estimates}

For a symmetric matrix $\mA=(a_{ij})$, we use two matrix norms $\norm{\mA}=\Lr(\R^d,\R^d)$ and $|\mA|_2=(\sum\limits_{1\leq i,j\leq d}|a_{ij}|^2)^{1/2}$. We denote by $\Lambda(\mA)$ to be the set of the $d$ eigenvalues of $\mA$. Then
\begin{align*}
\norm{\mA}=\sup_{\lambda\in \Lambda(\mA)} |\lambda|, \qquad |\mA|_2=(\sum_{\lambda\in \Lambda(\mA)} |\lambda|^2)^{1/2}.
\end{align*}
We have $\norm{\mA}\leq |\mA|_2\leq d^{1/2}\norm{\mA}$ and if $\det(\mA)\neq 0$
\begin{align}
\norm{\mA^{-1}}^{-d}\leq |\det (\mA)|\leq \norm{A}^d,\quad \norm{\mA^{-1}}\leq \frac{\norm{\mA}^{d-1}}{|\det \mA|}.
\end{align}
Next we need the implicit function theorem.  The theorem is classical and standard.  We include here a proof to track the dependence on the function.

\begin{lem}[Implicit function]\label{lem:implicit}
Assume $f(x,y):B_1(\R^n)\times B_1(\R^m)\to \R^m$ is a smooth function. Assume
\[f(0,0)=0, \quad \mA=\p_yf(0,0) \mbox{ is non-degenerate}\]
and $\sup_{x,y}(|\p_x f|+|\p_x\p_y f|+|\p_y^2f|)\leq K$. 
Then $\exists r\geq \frac14 (m+n)^{-6}|\mA|_2^{2-2m}|\det \mA|^2 K^{-2}$ and a unique continuous function $\varphi:B_r(\R^n)\to \R^m$ such that $\varphi(0)=0$ and $f(x,\varphi(x))=0$ for $x\in B_r(\R^n)$. Moreover, $\sup_{x\in B_r}|\varphi(x)|\leq \sqrt{r}$.
\end{lem}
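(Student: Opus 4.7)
The plan is to apply the Banach fixed point theorem to the iteration
$$T_x(y) = y - \mA^{-1} f(x,y),$$
so that a fixed point $\varphi(x)$ of $T_x$ satisfies $f(x,\varphi(x)) = 0$. I fix a small radius $r>0$ (to be chosen at the end) and take the fixed-point domain to be the closed ball $B_\rho(\R^m)$ with $\rho = \sqrt{r}$; the square root is forced by the conclusion $\sup_{x\in B_r}|\varphi(x)| \leq \sqrt{r}$.

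First I would establish the contraction estimate. Writing
$$\partial_y T_x(y) = \mA^{-1}\bigl(\partial_y f(0,0) - \partial_y f(x,y)\bigr),$$
and applying the mean value inequality together with the hypothesis $|\partial_x\partial_y f|+|\partial_y^2 f|\leq K$, I get
$$\|\partial_y T_x(y)\| \les \|\mA^{-1}\|\,K(|x|+|y|).$$
For $x\in B_r(\R^n)$ and $y\in B_\rho(\R^m)$ this gives a Lipschitz constant $\leq 1/2$ as soon as $r+\rho \les (K\|\mA^{-1}\|)^{-1}$.

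Next I would verify the self-mapping property. Since $f(0,0)=0$ and $|\partial_x f|\leq K$, the bound $|T_x(0)| = |\mA^{-1}f(x,0)|\les \|\mA^{-1}\|K r$ combined with the Lipschitz estimate yields $|T_x(y)|\leq |T_x(0)|+\tfrac12|y|$, so the condition $\|\mA^{-1}\|K r \les \sqrt{r}/2$, i.e.\ $\sqrt{r} \les (K\|\mA^{-1}\|)^{-1}$, guarantees $T_x(B_\rho(\R^m))\subset B_\rho(\R^m)$. Both requirements are therefore simultaneously satisfied whenever
$$r \les (K\|\mA^{-1}\|)^{-2}.$$
Banach's theorem then produces, for each $x\in B_r(\R^n)$, a unique $\varphi(x)\in B_\rho(\R^m)$ with $T_x(\varphi(x))=\varphi(x)$, hence $f(x,\varphi(x))=0$; continuity in $x$ is the standard parameter-dependent fixed point statement, and $\varphi(0)=0$ follows from uniqueness together with $T_0(0)=0$.

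Finally I would convert the admissible size of $r$ into the stated form using the matrix inequality recorded just before the lemma,
$$\|\mA^{-1}\| \leq \frac{\|\mA\|^{m-1}}{|\det\mA|} \leq \frac{|\mA|_2^{m-1}}{|\det\mA|},$$
which yields $r \ges |\mA|_2^{2-2m}|\det\mA|^2 K^{-2}$ with a dimension-dependent constant. The main obstacle is precisely this constant: a naive run of the argument gives an absolute prefactor, whereas the claimed bound has $\tfrac14(m+n)^{-6}$. Recovering the exponent $-6$ requires careful bookkeeping every time one passes from a componentwise bound on a derivative of a vector-valued function to an operator (or Frobenius-type) norm, and every time the mean value inequality is invoked on a map between $\R^{n+m}$ and $\R^m$; each such step costs a polynomial factor in $m+n$, and tracking them to ensure the total power does not exceed $6$ is the only real bookkeeping difficulty in the proof.
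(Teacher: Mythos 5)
Your proposal is correct and uses essentially the same approach as the paper: a Banach fixed-point argument on the Newton iteration $T(y)=y-\mA^{-1}f(x,y)$, with the fixed-point ball of radius $\rho=\sqrt r$ in the $y$-variable. The only cosmetic difference is that the paper runs the contraction once on the function space $X=\{\varphi\in C(B_r;\R^m):\varphi(0)=0,\ \|\varphi\|\le\delta\}$ (which gives continuity in $x$ for free), whereas you do it pointwise in $x$ and then invoke parameter-dependent continuity. The dimensional bookkeeping you flag as the remaining obstacle is exactly what the paper carries out: expanding $f(x,\varphi)=\p_xf(\xi_x,\varphi)x+\mA\varphi+\tfrac12\p_y^2f(0,\xi_y)\varphi^2$ produces factors $mn$, $m^2n$, $m^3$ from the componentwise-to-norm conversions, all dominated by $(m+n)^3$, so the choice $\delta=\tfrac12\bigl(\|\mA^{-1}\|(m+n)^3K\bigr)^{-1}$ and $r=\delta^2$ closes both the self-mapping and contraction estimates, and combining with $\|\mA^{-1}\|\le|\mA|_2^{m-1}/|\det\mA|$ gives the advertised $\tfrac14(m+n)^{-6}|\mA|_2^{2-2m}|\det\mA|^2K^{-2}$.
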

\begin{proof}
Define the mapping
\[T\varphi(x)=\varphi(x)-\mA^{-1}f(x,\varphi(x)).\]
We will show $T:X\to X$ is a contraction mapping on the metric space
\[X=\{\varphi\in C(B_r(\R^n):\R^m): \varphi(0)=0, \norm{\varphi}\leq \delta\}\]
with the metric $d(\phi,\psi)=\norm{\phi-\psi}=\sup_{x\in B_r}|\phi(x)-\psi(x)|$. Here the parameters $r,\delta$ is to be determined.

Clearly, for $\varphi\in X$, $T\varphi(0)=0$ and since $f(x,\varphi)=f(x,\varphi)-f(0,\varphi)+f(0,\varphi)-f(0,0)=\p_xf(\xi_x,\varphi)x+\mA \varphi+\frac12\p_y^2f(0,\xi_y)\varphi^2$, then we get  
\begin{align*}
\norm{T\varphi}\leq& \norm{\mA^{-1} \p_xf(\xi_x,\varphi)x}+\frac{1}{2}\norm{\mA^{-1}\p_y^2f(0,\xi_y)\varphi^2}\\
\leq&\norm{\mA^{-1}}\cdot mnKr+\frac12\norm{\mA^{-1}}\cdot m^3K\delta^2
\end{align*}
and for $\phi,\psi\in X$, $f(x,\varphi)-f(x,\psi)=\p_x\p_y f(\xi_x,\varphi_y)(\varphi-\psi)x-[f(0,\varphi)-f(0,\psi)]=\p_x\p_y f(\xi_x,\varphi_y)(\varphi-\psi)x+\mA (\varphi-\psi)+\frac12\p_y^2f(0,\xi_y)(\varphi-\psi)^2$ from which we get
\begin{align*}
\norm{T\varphi-T\psi}\leq&\norm{\mA^{-1}}\cdot m^2nKr\norm{\varphi-\psi}+\frac12\norm{\mA^{-1}}\cdot m^3K\delta \norm{\varphi-\psi}.
\end{align*}
Taking 
$\delta=\frac12(\norm{\mA^{-1}}(m+n)^3K)^{-1}$ and 
$r=\delta^2$, we get $T:X\to X$ is a contraction mapping and hence prove the lemma. 
\end{proof}

\begin{cor}
Assume $f$ is defined on $B_1(\R^d)$, smooth, real-valued, $f(0)=0$ and $\mA=\hess f(0)$ is non-degenerate. Then at least one of the following holds:

\noindent (1) $\exists x_0\in B_1$ such that $\nabla f(x_0)=0$;

\noindent (2) $\exists \delta=|\det \mA|^2\delta(S_2(f))$ such that $|\nabla f(x)|\geq |\det \mA|^2C(S_2(f))$, $\forall x\in B_{\delta}$. 

\noindent Here $\delta(\cdot)$ and $C(\cdot)$ are both continuous and increasing functions from $\R^+$ to $\R^+$ depending only on $d$. 
\end{cor}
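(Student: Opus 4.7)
The strategy is to apply Lemma~\ref{lem:implicit} to invert $\nabla f$ in a quantitative neighbourhood of the origin and then split into two cases based on the magnitude of $|\nabla f(0)|$. To this end, I would introduce the auxiliary map $g : B_1(\R^d) \times B_1(\R^d) \to \R^d$ defined by
\[
g(y, x) := \nabla f(x) - \nabla f(0) - y,
\]
with $y$ playing the role of the parameter and $x$ of the unknown (so the roles of $x$ and $y$ are swapped relative to the statement of Lemma~\ref{lem:implicit}). One checks $g(0,0) = 0$ and $\p_x g(0,0) = \hess f(0) = \mA$, which is non-degenerate by hypothesis. Since $\p_y g = -\mI$, $\p_y \p_x g = 0$, and $|\p_x^2 g|$ is controlled by $S_3(f)$, the parameter $K$ of Lemma~\ref{lem:implicit} is bounded by a continuous function of $S_{[2,3]}(f)$. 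Combined with the elementary bound $|\mA|_2 \leq d^{1/2} S_2(f)$, Lemma~\ref{lem:implicit} then produces a radius
\[
r \gec |\det \mA|^2\, \rho\bigl(S_{[2,3]}(f)\bigr)
\]
and a continuous map $\varphi : B_r(\R^d) \to \R^d$ satisfying $\varphi(0) = 0$, $|\varphi(y)| \leq \sqrt{r}$, and $\nabla f(\varphi(y)) = y + \nabla f(0)$ for all $y \in B_r$.

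Next I would split into two cases. If $|\nabla f(0)| \leq r$, set $y_0 := -\nabla f(0) \in B_r$; then $x_0 := \varphi(y_0) \in B_1$ satisfies $\nabla f(x_0) = 0$, giving conclusion (1). Otherwise $|\nabla f(0)| > r$, and for every $|x| \leq r/(2 d^{1/2} S_2(f))$ the mean value theorem together with $\sup_{B_1}|\hess f|_2 \leq d^{1/2} S_2(f)$ yields
\[
|\nabla f(x)| \geq |\nabla f(0)| - d^{1/2} S_2(f)\, |x| \geq \tfrac{1}{2} r,
\]
which is conclusion (2) with both the radius $\delta := r/(2 d^{1/2} S_2(f))$ and the lower bound $r/2$ of the form $|\det \mA|^2$ times a continuous function of $S_{[2,3]}(f)$.

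The main obstacle is the bookkeeping needed to cast the bound of Lemma~\ref{lem:implicit} into the claimed factorised form: the factor $|\mA|_2^{2-2d}$ has a negative exponent for $d \geq 2$ and must be absorbed into a function of $S_2(f)$ via $|\mA|_2 \leq d^{1/2} S_2(f)$, so that the explicit factor $|\det \mA|^2$ survives intact. A minor point worth flagging is that the term $|\p_x^2 g|$ forces a dependence on the third derivatives of $f$, so strictly speaking the argument of $\delta(\cdot)$ and $C(\cdot)$ should be $S_{[2,3]}(f)$ rather than $S_2(f)$, with the monotonicity understood so that the bounds deteriorate as the higher-order norms of $f$ grow.
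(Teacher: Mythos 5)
Your argument coincides with the paper's own proof: both apply Lemma~\ref{lem:implicit} to the map $(x,y)\mapsto \nabla f(y) + x - \nabla f(0)$ (yours is the same function up to a sign and a swap of variable names), obtain a quantitative local inverse of $\nabla f$ near the origin, and split into cases depending on whether $|\nabla f(0)|$ falls below the guaranteed radius. The caveat you flag at the end is correct and worth keeping: since $\partial_y^2 F = \nabla^3 f$, the constant $K$ in Lemma~\ref{lem:implicit} genuinely involves third derivatives, so the arguments of $\delta(\cdot)$ and $C(\cdot)$ should read $S_{[2,3]}(f)$ rather than $S_2(f)$ as stated in the paper.
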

\begin{proof}
Let $y_0=\nabla f(0)$. Define the function $F(x,y)=\nabla f(y)+x-y_0$. Then $F(0,0)=0$ and $\p_yF(0,0)=\hess f(0)$. By Lemma \ref{lem:implicit} we get there exists $r\geq |\det \mA|^2 C(S_2(f))$ and a continuous function $y=\varphi(x)$ satisfies $F(x,\varphi(x))=0$ for $x\in B_r(\R^d)$. In particular, if $|\nabla f(0)|\leq |\det \mA|^2C(S_2(f))$, then $y_0\in B_r$ and hence (1) holds by taking $x_0=\varphi(y_0)$.

On the other hand, if $|\nabla f(0)|\geq |\det \mA|^2C(S_2(f))$, then by the mean value formula
\[\nabla f(x)=\nabla f(0)+\hess f(x_\theta)x\]
we get that if $|x|\leq |\det \mA|^2\delta(S_2(f))$
\[|\nabla f(x)\geq \frac12 |\nabla f(0)|, \quad x\in B_\delta.\]
Thus (2) holds.
\end{proof}

\begin{lem}[Morse's Lemma]\label{lem:morse}
Assume $d\geq 2$, $f$ is defined on $B_1$, smooth, real-valued, and satisfies

\noindent (1) $f(0)=0$, $\nabla f(0)=0$;

\noindent (2) $\mA=\hess f(0)$  is non-degenerate. 

\noindent Then there exists $\delta\geq C_d{S_3(f)^{-1}|\mA|_2^{1-d}}|\det \mA|$ and a diffeomorphism $\gamma: B_{\de}\to \gamma(B_{\de})\subset B_1$ such that $
\ga(0)=0$ and
\[f(\gamma (x))=x^T\mA x, \quad x\in B_{\delta}.\]
Moreover, $B_{\de/1000}\subset \gamma(B_{\de})$ and for any $\al$ 
\EQ{\label{eq:dgamma}
\sup_{x\in B_{\de}}|\p_x^\al \gamma(x)|\leq C_{\al,d}(S_{[2, 2+|\al|]}(f)).
}
\end{lem}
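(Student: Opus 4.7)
The plan is to follow the classical Morse lemma proof via matrix factorization, and to carefully track all constants through the quantitative implicit function theorem of Lemma \ref{lem:implicit}. First, since $f(0)=0$ and $\nabla f(0)=0$, Taylor's formula with integral remainder yields
\[
f(x)=x^T M(x)x, \qquad M(x):=\int_0^1(1-t)\hess f(tx)\,dt,
\]
so $M:B_1\to\R^{d\times d}$ is smooth and symmetric with $M(0)=\tfrac12\mA$ and $\sup_{B_1}|\p^\alpha M|_2\les S_{|\alpha|+2}(f)$. The goal is to produce a smooth matrix-valued $B(x)$ with $B(0)=I$ satisfying $B(x)^T\mA B(x)=2M(x)$; then $f(x)=\tfrac12(B(x)x)^T\mA(B(x)x)$, and $\gamma^{-1}(x):=B(x)x$ gives the desired change of variables (the factor $\tfrac12$ is absorbed via a linear rescaling $x\mapsto\sqrt2\,x$ at the end).

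Solving for $B$ proceeds by the symmetric ansatz $B(x)=I+\mA^{-1}S(x)$ with $S$ symmetric, which reduces the identity to
\[
2S+S\mA^{-1}S=2M(x)-\mA, \qquad S(0)=0.
\]
The preliminary rescaling $S=\|\mA^{-1}\|^{-1}T$ then exposes the structure that only a single factor of $\|\mA^{-1}\|$ enters linearly on the right while the nonlinear term has coefficient of unit norm. Hence Lemma \ref{lem:implicit} applied to this rescaled equation (or, equivalently, a direct contraction argument) produces a unique smooth symmetric solution $S(x)$ on a ball $B_\delta$ with $|S(x)|\les S_3(f)|x|$ and radius
\[
\delta\gtrsim_d\frac{1}{\|\mA^{-1}\|\,S_3(f)}\gtrsim_d\frac{|\det\mA|}{|\mA|_2^{d-1}\,S_3(f)},
\]
where the second inequality uses $\|\mA^{-1}\|\leq|\mA|_2^{d-1}/|\det\mA|$ recalled just before Lemma \ref{lem:implicit}. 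Differentiating $2S+S\mA^{-1}S=2M(x)-\mA$ inductively expresses $\p^\alpha S$ as a polynomial in $\p^{\leq|\alpha|}M$ and $\mA^{-1}$, giving $\sup_{B_\delta}|\p^\alpha S|\leq C_{\alpha,d}(S_{[2,2+|\alpha|]}(f))$.

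Setting $\gamma^{-1}(x):=(I+\mA^{-1}S(x))x$ gives $\gamma^{-1}(0)=0$, $D\gamma^{-1}(0)=I$, and $\|D\gamma^{-1}-I\|\leq\tfrac12$ on $B_\delta$ after shrinking $\delta$ by a further dimensional factor (because $|\mA^{-1}S(x)|+|\mA^{-1}DS(x)\,x|\les\|\mA^{-1}\|\,S_3(f)|x|$ is small there). A contraction-mapping inversion of this near-identity map produces the diffeomorphism $\gamma$, and the pointwise estimate $|\gamma^{-1}(x)-x|\leq|x|/2$ forces $\gamma(B_\delta)\supset B_{\delta/2}\supset B_{\delta/1000}$. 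The bounds \eqref{eq:dgamma} on $\p^\alpha\gamma$ then follow by applying the Fa\`a di Bruno formula to $\gamma\circ\gamma^{-1}=\Id$ and combining with the derivative bounds on $S$.

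The main obstacle is obtaining the sharp dependence $\delta\gtrsim(\|\mA^{-1}\|S_3(f))^{-1}$, which gives the stated product $|\det\mA|/(|\mA|_2^{d-1}S_3(f))$, as opposed to the much weaker $(\|\mA^{-1}\|+S_3(f))^{-2}$ that a black-box application of Lemma \ref{lem:implicit} would produce. The rescaling $S=\|\mA^{-1}\|^{-1}T$ noted above is the key point: it turns the problem into a fixed-point equation whose forcing term and nonlinearity are individually well-normalized, so that the radius of applicability involves the product rather than the sum of the two small parameters.
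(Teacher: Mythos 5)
Your proposal is correct but follows a genuinely different route from the paper's. The paper proves the lemma by Moser's homotopy method: it interpolates $f_t=\wt A+t(f-\wt A)$, constructs a time-dependent vector field $\xi(t,x)=-\mB_{t,x}^{-1}\mG_x x$ whose flow $\varphi_t$ transports $f_t$ back to $\wt A$, and takes $\gamma=\varphi_1$; all constants are tracked through the Picard--Lindel\"of existence theorem and a Gronwall estimate on $\nabla\varphi_t$. You instead use the matrix-factorization (Morse--Palais) route: write $f(x)=x^T M(x)x$ and solve the quadratic matrix equation $B(x)^T\mA B(x)=2M(x)$ by the ansatz $B=\mI+\mA^{-1}S$, which reduces to $2S+S\mA^{-1}S=2M-\mA$, producing $\gamma^{-1}(x)=B(x)x$ and then inverting the near-identity map. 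Both arguments land on the same quantitative radius $\delta\gtrsim_d(\norm{\mA^{-1}}S_3(f))^{-1}\geq|\det\mA|\,|\mA|_2^{1-d}S_3(f)^{-1}$, and you correctly identify and resolve the one real obstacle in your route: a black-box application of Lemma \ref{lem:implicit} would only yield $\delta\gtrsim K^{-2}$ with $K\sim\norm{\mA^{-1}}S_3(f)+1$, which is too weak, so a direct contraction argument on the rescaled equation $2T+\norm{\mA^{-1}}^{-1}T\mA^{-1}T=\norm{\mA^{-1}}(2M-\mA)$ is necessary to extract the product form. (Note the paper likewise avoids Lemma \ref{lem:implicit} here; the sharpness comes from running the ODE argument by hand.) Your approach trades the ODE-flow bookkeeping for algebraic bookkeeping on a quadratic matrix equation, and both require an inductive pass (Gronwall for the paper, differentiating the matrix identity for you) to get the higher-derivative bounds \eqref{eq:dgamma}. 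Two minor points worth tightening if you wrote this out in full: first, you should verify that the contraction is set up in the space of \emph{symmetric} matrices so that $\p_T$ of the quadratic map is $2\mI$ plus a small perturbation on that subspace; second, the step passing from $\norm{D\gamma^{-1}-\mI}\leq\tfrac12$ on $B_\delta$ to $\gamma^{-1}(B_\delta)\supset B_{\delta/2}$ should be justified by a fixed-point argument for $x\mapsto y-\mA^{-1}S(x)x$ rather than only a Lipschitz injectivity bound, since surjectivity onto the smaller ball is what is actually needed. Both are routine.
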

\begin{proof}
We define the quadratic function $\wt A(x)=x^T\mA x$ for $x=(x_1,\cdots,x_d)^T\in \R^d$. We can write $f(x)-\wt A(x)=x^T\mG_x x$
where
\begin{align*}
\mG_x =& \int_0^1(1-s)\hess f(sx)ds-\mA\\
= &-\frac{1}{2}\mA+\int_0^1(1-s)[\hess f(sx)-\hess f(0)]ds\\
:= &-\frac{1}{2}\mA+\wt \mG_x.
\end{align*}
Next we define a path from $\wt A$ to $f$:
\[f_t(x)=\wt A(x)+t[f(x)-\wt A(x)], \quad t\in [0,1].\]
Since $\nabla f_t(0)=0$, then 
\[\nabla f_t(x)=\int_0^1 \hess f_t(sx)ds \cdot x=\brk{\mA+t\int_0^1 [\hess f(sx)-\hess f(0)]ds}\cdot x:=\mB_{t,x} \cdot x.\]
We write $\mB_{t,x}=\mA+\wt\mB_{t,x}$ with $\wt\mB_{t,x}=t\int_0^1 [\hess f(sx)-\hess f(0)]ds$.
Note that $\wt\mB_{t,0}=\wt\mG_0=0$ and $\max(\norm{\wt\mB_{t,x}},\norm{\wt\mG_{x}})\leq dS_3(f)\delta$ for all $t\in [0,1]$ and $x\in B_\delta(\R^d)$. 

We write $\mB_{t,x}=\mA(\mI+\mA^{-1}\wt\mB_{t,x})$.  Since $\norm{\mA^{-1}\wt\mB_{t,x}}\leq \norm{\mA^{-1}}dS_3(f)\delta\leq 1/100$,  $\mB_{t,x}$ is invertible and 
\begin{align}
\mB_{t,x}^{-1}=(\mI+\mA^{-1}\wt\mB_{t,x})^{-1}\mA^{-1}.
\end{align}
Indeed, $(\mI+\mA^{-1}\wt\mB_{t,x})^{-1}=\sum_{k=0}^\infty (-\mA^{-1}\wt\mB_{t,x})^k$ and we have
\[\norm{(\mI+\mA^{-1}\wt\mB_{t,x})^{-1}}\leq 10/9.\]
Define a vector field $\xi(t,x)=-\mB_{t,x}^{-1}\mG_x x=\frac{1}{2}x+\wt \xi(t,x)$. Then we get 
\begin{align}\label{eq:xibd}
\sup_{|x|\leq \de, t\in [0,1]}|\xi(t,x)|\leq& \frac{1}{2}\cdot \frac{1+2d\delta \norm{\mA^{-1}}}{1-d\delta \norm{\mA^{-1}}}\leq \frac{3}{4},\\
\sup_{|x|\leq \de, t\in [0,1]}|\p_x\wt\xi(t,x)|\leq& \norm{\mB_{t,x}^{-1}\mG_x+\frac{1}{2}\mI}+\norm{\mB_{t,x}^{-1}\p\mG_x x}+\norm{\mB_{t,x}^{-2}\p \mB_{t,x}\mG_x x}\\
\leq& C_d\norm{\mA^{-1}}S_{3}(f)\delta\leq \frac{1}{30}
\end{align}
Moreover, for $|\al|\geq 2$, since
$\max(|\p_x^\al \mG_x|_2,|\p_x^\al \mB_{t,x}|_2)\leq d \cdot S_{2+|\al|}(f)$, we get
\begin{align*}
|\p_x^\al \xi(t,x)|\leq& C_{\al,d}\cdot \sum_{1\leq |\be|\leq |\al|}\norm{\mA^{-1}}^{1+|\beta|}\cdot [S_{3+|\al|-|\be|}(f)]^{|\be|}\\
\leq &C_{\al,d}(S_{[2, 2+|\al|]}(f))
\end{align*}
For fixed $x$ let $\varphi_t(x)$ be the flow generated by $\xi$, namely
\[\frac{d}{dt}\varphi_t=\frac{1}{2}\varphi_t+\wt\xi(t,\varphi_t), \quad \varphi_0=x.\]
By \eqref{eq:xibd} and the Picard-Lindel\"of theorem, we get for $|x|\leq \frac{1}{4}$
the flow $\varphi_t$ exists for $t\in [0,1]$ and $x\to \varphi_t(x)$ is a differmorphism for $t\in [0,1]$. We have $f_t(\varphi_t(x))=f_0(x)=\wt A(x)$ since
\[\frac{d}{dt}[f_t(\varphi_t)]=\dot{f}_t(\varphi_t)+\nabla f_t(\varphi_t)\cdot \dot\varphi_t=\varphi_t^T\mG_{\varphi_t}\varphi_t+\xi(t,\varphi_t)^TB_{t,\varphi_t}\varphi_t=0.\]
In particular, $y=\varphi_1(x): B_{1/4}\to \varphi_1(B_{1/4})\subset B_1$ is a diffeomorphism and satisfies $f(\varphi_1(x))=\wt A(x)$. By uniqueness $\varphi_t(0)=0$ for $t\in [0,1]$. Moreover, from
\begin{align}\label{eq:varphi_t}
\nabla \varphi_t(x)=e^{1/2}\mI+\int_0^t e^{(t-s)/2} \nabla \wt\xi(s,\varphi_s)\nabla \varphi_s(x)ds
\end{align}
we get
$\norm{(\nabla \varphi_t(x))^{-1}}\leq 10$
and $B_{\de/1000}\subset \varphi_1(B_{\de})$. By \eqref{eq:varphi_t} again and Gronwall inequality, we get
\[|\p_x^\al\varphi_t(x)|\leq C(\sup_{|x|\leq \de,t\in [0,1]}|\p_x^\al \xi(t,x)|)\leq C_{\al,d}(S_{[2, 2+|\al|]}(f)), \quad t\in [0,1].\]
Taking $\gamma=\varphi_1$ we finish the proof.
\end{proof}

\begin{thm}\label{thm2}
Assume $d\geq 2$, $\psi$ is supported in $B_{1/1000}$, and $\phi$ is defined on $B_1$ and satisfies\\
(1) $\phi(0)=0, \nabla\phi(0)=0$; \\
(2) $\hess \phi(0)$ is non-degenerate and $S_2(\phi)\leq C_2$, $|\det \hess \phi(0)|S_3(\phi)^{-1}\geq C_3$; \\
(3) $|S_k(\phi)|\leq C_4$ for some universal $C_4$, if $|x|\leq 1$, $0\leq k\leq [d/2]+4$.\\
Then
\[I(\lambda)=(-\lambda \pi^{-1})^{-d/2}e^{-i\frac{\pi}{4} \sgn \mA}|\det \mA|^{-1/2}\psi(0)+R(\lambda),\]
and for some $C$ depending on $C_i$, but  independent of $\phi, \psi, \lambda$, we have
\[|R(\lambda)|\leq C
\lambda^{-d/2} \lambda^{-1/2}\lambda^{\frac{d}{4}-\frac{[d/2]}{2}}  |\det \mA|^{-1/2}\norm{\mA^{-1}}S_{[0,[d/2]+3]}(\psi).\]
\end{thm}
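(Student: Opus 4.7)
The plan is to reduce to a quadratic phase via Morse's Lemma and then mimic the Plancherel-and-split argument used in the proof of Theorem \ref{thm1}. By hypotheses (1)--(3) and Lemma \ref{lem:morse}, there is a smooth diffeomorphism $\gamma:B_\delta\to \gamma(B_\delta)\subset B_1$ with $\gamma(0)=0$ and $\phi(\gamma(z))=z^T\mA z$ for $\mA=\hess\phi(0)$, where $\delta$ is bounded below in terms of $C_2,C_3,C_4$ and $\gamma$ obeys the derivative bounds \eqref{eq:dgamma} up to order $[d/2]+2$. Since $\supp\psi\subset B_{1/1000}\subset\gamma(B_\delta)$, the change of variables $x=\gamma(z)$ converts $I(\lambda)$ into $\int e^{i\lambda z^T\mA z}u(z)\,dz$ with $u(z):=\psi(\gamma(z))|\det\gamma'(z)|$, compactly supported in a small ball. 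Differentiating the Morse identity twice at $z=0$ gives $(\gamma'(0))^T\mA\gamma'(0)=2\mA$, hence $|\det\gamma'(0)|=2^{d/2}$, which fixes the leading-term coefficient.

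Next I would apply the multi-dimensional Fresnel identity
\[\ft[e^{i\lambda z^T\mA z}](\xi)=2^{-d/2}\lambda^{-d/2}|\det\mA|^{-1/2}e^{i\pi\sgn(\mA)/4}e^{-i\xi^T\mA^{-1}\xi/(4\lambda)}\]
together with Parseval's formula to write
\[I(\lambda)=2^{-d/2}\lambda^{-d/2}|\det\mA|^{-1/2}e^{i\pi\sgn(\mA)/4}\int e^{-i\xi^T\mA^{-1}\xi/(4\lambda)}\hat u(\xi)\,d\xi,\]
and split the phase factor as $1+(e^{-i\xi^T\mA^{-1}\xi/(4\lambda)}-1)$. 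The $1$-integral produces $(2\pi)^{d/2}u(0)=(2\pi)^{d/2}\cdot 2^{d/2}\psi(0)$, yielding the stated leading term, and the second piece defines $R(\lambda)$.

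For the remainder I would bound $|e^{ia}-1|\leq\min(|a|,2)$ and apply Cauchy--Schwarz with weight $\jb{\xi}^{-s}$:
\[|R(\lambda)|\lec\lambda^{-d/2}|\det\mA|^{-1/2}\brk{\int\min\brk{\tfrac{\norm{\mA^{-1}}|\xi|^2}{\lambda},1}^{2}\jb{\xi}^{-2s}\,d\xi}^{1/2}\norm{u}_{H^s}.\]
Splitting the $\xi$-integral at the scale $|\xi|\sim(\lambda/\norm{\mA^{-1}})^{1/2}$ and tuning $s$ to be of order $([d/2]+1)/2$ balances the two halves and extracts the stated power $\lambda^{-1/2+d/4-[d/2]/2}$ together with the factor $\norm{\mA^{-1}}$. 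Finally, Sobolev embedding yields $\norm{u}_{H^s}\lec\norm{u}_{H^{[d/2]+1}}$, and the chain rule applied to $u=(\psi\circ\gamma)|\det\gamma'|$ together with \eqref{eq:dgamma}, which is valid up to order $[d/2]+2$ thanks to the smoothness of $\phi$ through order $[d/2]+4$ in hypothesis (3), gives $\norm{u}_{H^{[d/2]+1}}\lec S_{[0,[d/2]+3]}(\psi)$.

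The main obstacle I anticipate is producing the exact combination $|\det\mA|^{-1/2}\norm{\mA^{-1}}$ in the remainder bound: the crude estimate $|\xi^T\mA^{-1}\xi|\leq\norm{\mA^{-1}}|\xi|^2$ is likely to lose a fractional power, so one probably needs to first diagonalize $\mA=O^T DO$ and rescale $\xi=|D|^{1/2}O\eta$ so that the quadratic form becomes the standard $\sum\pm\eta_i^2$ before performing the Plancherel estimate; the Jacobian $|\det\mA|^{1/2}$ and the transformation of the weighted $L^2$ norm on $\hat u$ then combine to give the correct dependence. A secondary task is carefully tracking every constant produced by $\gamma$ and $\gamma'$ back to the universal constants $C_2,C_3,C_4$ in the Morse step.
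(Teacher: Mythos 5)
Your plan is structurally identical to the paper's argument: Morse's Lemma (Lemma~\ref{lem:morse}) to straighten the phase into $z^T\mA z$, the Fresnel transform of the quadratic exponential together with Plancherel, splitting off the constant $\xi=0$ contribution, and a weighted Cauchy--Schwarz on the remainder with a Sobolev bound on $u=(\psi\circ\gamma)\abs{\det\gamma'}$ controlled via \eqref{eq:dgamma}. Your observations about the Jacobian factor $\abs{\det\gamma'(0)}$ and about the crudeness of the bound $\abs{\xi^T\mA^{-1}\xi}\le\norm{\mA^{-1}}\abs{\xi}^2$ are both genuine: the paper silently takes $u(0)=\psi(0)$, which only holds if the Morse normalization produces $\abs{\det\gamma'(0)}=1$ (i.e.\ $\phi(\gamma(z))=\tfrac12 z^T\mA z$ rather than $z^T\mA z$), and the paper's bookkeeping of the $\norm{\mA^{-1}}$ power is likewise loose.

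There is, however, a quantitative error that would make your remainder step collapse as written. You propose to Cauchy--Schwarz against $\jb{\xi}^{-s}$ with $s$ of order $([d/2]+1)/2$. That value of $s$ is $\le d/2$ for every $d\ge 2$, and then the tail of the weight integral,
\[
\int_{|\xi|>(\lambda/\norm{\mA^{-1}})^{1/2}}\jb{\xi}^{-2s}\,d\xi,
\]
diverges (or is log-divergent at the borderline $d=2$). Convergence requires $s>d/2$, and simultaneously $s<d/2+2$ for the inner piece near $\xi=0$; the natural integer choice is $s=K=[d/2]+1$, which is exactly the exponent the paper uses when it writes $\frac{e^{-i\xi^T\mA^{-1}\xi/4\lambda}-1}{|\xi|^K}\cdot|\xi|^K\hat u(\xi)$ and pairs with $\norm{\p^K u}_{L^2}$. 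Carrying out your split at $|\xi|\sim(\lambda/\norm{\mA^{-1}})^{1/2}$ with $s=[d/2]+1$ reproduces the paper's decay $\lambda^{-d/2-1/2+d/4-[d/2]/2}$ (with the $\norm{\mA^{-1}}$ factor appearing with a fractional power $(2K-d)/4$ rather than $1$, matching the imprecision you already flagged). With $s$ fixed to $[d/2]+1$ there is also no Sobolev embedding step to perform; you simply need $\norm{u}_{H^{[d/2]+1}}$ directly, which by \eqref{eq:dgamma} and the chain rule requires $\gamma$-derivatives up to order $[d/2]+2$ and $\phi$-derivatives up to order $[d/2]+4$, consistent with hypothesis~(3).
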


\begin{proof}
Let $\mA=\hess \phi(0)$. By Lemma \ref{lem:morse},  there exists a differmorphism $\gamma: B_{1/4}\to \gamma(B_{1/4})$ such that
\[\phi(\gamma(y))=y^T\mA y.\]
Making the change of variables $x=\gamma(y)$, we get
\[I(\lambda)=\int e^{i\lambda y^T\mA y}\psi(\ga(y))|\det \p_y \ga(y)|dy.\]
As the one dimension case, using the fact (see (3.2.3) in \cite{Zwor})
\[\F(e^{-i\lambda y^T\mA y})=(-\lambda \pi^{-1})^{-d/2}e^{i\frac{\pi}{4} \sgn \mA}|\det \mA|^{-1/2}e^{\frac{i}{4\lambda}\xi^T \mA^{-1}\xi},\]
and denoting $u(y)=\psi(\ga(y))|\det \p_y \ga(y)|$, by Plancherel's equality we get
\begin{align*}
I(\lambda)=&\int \wh u(\xi)(-\lambda \pi^{-1})^{-d/2}e^{-i\frac{\pi}{4} \sgn \mA}|\det \mA|^{-1/2}e^{-\frac{i}{4\lambda}\xi^T \mA^{-1}\xi}d\xi\\
=&(-\lambda \pi^{-1})^{-d/2}e^{-i\frac{\pi}{4} \sgn \mA}|\det \mA|^{-1/2}u(0)\\
&+\int (-\lambda \pi^{-1})^{-d/2}e^{-i\frac{\pi}{4} \sgn \mA}|\det \mA|^{-1/2}
(e^{-\frac{i}{4\lambda}\xi^T \mA^{-1}\xi}-1)\hat u(\xi)d\xi\\
:=&(-\lambda \pi^{-1})^{-d/2}e^{-i\frac{\pi}{4} \sgn \mA}|\det \mA|^{-1/2}\psi(0)+R(\lambda).
\end{align*}
Now we estimate $R(\lambda)$.   Take $K=[d/2]+1$.  We have
\EQ{
R(\lambda)=(-\lambda \pi^{-1})^{-d/2}e^{-i\frac{\pi}{4} \sgn \mA}|\det \mA|^{-1/2}\int 
\frac{(e^{-\frac{i}{4\lambda}\xi^T \mA^{-1}\xi}-1)}{|\xi|^K}|\xi|^K\hat u(\xi)d\xi
}
By \eqref{eq:dgamma} we get
\begin{align*}
|R(\lambda)|\les& \lambda^{-d/2}|\det \mA|^{-1/2}\normo{\frac{e^{-\frac{i}{4\lambda}\xi^T \mA^{-1}\xi}-1}{|\xi|^K}}_{L^2}\norm{\p^K u}_{L^2}\\
\les&
 \lambda^{-d/2}|\det \mA|^{-1/2}\lambda^{-K/2}\normo{\frac{e^{-\frac{i}{4\lambda}\xi^T \mA^{-1}\xi}-1}{|\lambda^{-1/2}\xi|^K}}_{L^2}\norm{\p^K u}_{L^2}\\
 \les&
\lambda^{-d/2} \lambda^{-K/2+d/4}|\det \mA|^{-1/2}\norm{\mA^{-1}} \norm{\p^K u}_{L^2}\\
\les&
\lambda^{-d/4-K/2} |\det \mA|^{-1/2}\norm{\mA^{-1}}C(S_{2,K+3}(\phi))S_{[0,K+2]}(\psi).
\end{align*}
We complete the proof of the lemma.
\end{proof}
%
%
%

\section{Applications}

\subsection{Uniform estimates on Bessel function}\label{subsect:Bessel}

In this subsection, we give an application of our results to the Bessel function
\begin{align*}
J_\nu(r)=\frac{(r/2)^\nu}{\Gamma(\nu+1/2)\pi^{1/2}}
\int_{-1}^1e^{irt}(1-t^2)^{\nu-1/2}dt, \ \ \nu>-1/2,
\end{align*}
where $\Gamma(\cdot)$ is the Gamma function.
Bessel functions is one of the most important special functions.  It has been extensively studied,  for example, see \cite{Bess, NIST}.   We focus on the decay and oscillation properties.  For a fixed argument $\nu$,  the Bessel function are better understood.   For example,  we have (see \cite{Stein}): as $r\to \infty$
\begin{align}
J_\nu (r)|\leq C_\nu |r|^{-1/2}
\end{align}
and 
\begin{align}\label{eq:Besselex}
J_\nu (r)=(\frac{\pi r}{2})^{-1/2}\cos(r-\frac{\pi}{4}-\frac{\nu \pi}{2})+O_\nu(1) r^{-3/2}.
\end{align}
However,  for the uniform properties (with respect to all $\nu$),  the problems are much harder and there are less results.   For this purpose we usually use the Schl\"{a}fli's integral representation of Bessel function (see p. 176, \cite{Bess}):
\begin{align}\label{eq:Besselint}
J_\nu(r)=&\frac{1}{2\pi}\int_{-\pi}^\pi e^{i(r\sin x-\nu
x)}dx-\frac{\sin(\nu\pi)}{\pi}\int_0^\infty
e^{-\nu\tau-r\sinh \tau}d\tau \nonumber\\
:=&J_\nu^M(r)-J_\nu^E(r),
\end{align}
We have the uniform decay (see section 10.20.4 in \cite{NIST}): assume $r,\nu>10$,  then
\begin{align}
|J_\nu(r)|+|J_\nu'(r)|\leq& Cr^{-1/3}(1+r^{-1/3}|r-\nu|)^{-1/4}.
\end{align}
and (see Lemma 2.2 in \cite{GuoNon}): assume $\nu\in \N$, $\nu>r+\lambda$, and
$\lambda>r^{\frac{1}{3}+\e}$ for some $\e>0$,  then for any $K\in \N$
\begin{align}
|J_\nu(r)|+|J_\nu'(r)|\leq C_{K,\e}r^{-K\e}.
\end{align}

The main contribution of $J_\nu(r)$ is on the region $r>\nu+\nu^{1/3}$.   We would like to obtain a uniform estimates that provide both decay and oscillation as \eqref{eq:Besselex},  especially in the
transitive region $\nu+\nu^{1/3}<r<2\nu$.   These properties play crucial roles in the study of Schr\"odinger equations. 
We have

\begin{lem}[Asymptotical property]\label{lem:Bessel}\footnote{This lemma was given in \cite{GuoNon} without proof.    We include the proof here as an application.  The proof is taken from the arxiv version of  \cite{GuoNon} and not published anywhere.}
Let $\nu>10$ and $r>\nu+\nu^{1/3}$. Then

(1) We have
\[J_\nu(r)=\frac{1}{\sqrt{2\pi}}\frac{e^{i\theta(r)}+e^{-i\theta(r)}}{(r^2-\nu^2)^{1/4}}+h(\nu,r),\]
where
\[\theta(r)=(r^2-\nu^2)^{1/2}-\nu \arccos \frac{\nu}{r}-\frac \pi 4\]
and
\[|h(\nu,r)|\les \bigg(\frac{\nu^2}{(r^2-\nu^2)^{7/4}}+\frac{1}{r}\bigg)1_{[\nu+\nu^{1/3},2\nu]}(r)+r^{-1}1_{[2\nu,\infty)}(r).\]

(2) Let $x_0=\arccos \frac{\nu}{r}$. For any $K\in \N$ we have
\begin{align*}
h(\nu,r)=&(2\pi)^{-1/2}e^{i\theta(r)}x_0\sum_{k=1}^K\frac{
(rx_0^3)^{-k-1/2}a_k(x_0)}{k!}\\
&+(2\pi)^{-1/2}e^{-i\theta(r)}x_0\sum_{k=1}^K\frac{(rx_0^3)^{-k-1/2}\tilde
a_k(x_0)}{k!}+\tilde h(\nu,r)
\end{align*}
with functions $|\p^l a_k|+|\p^l \tilde a_k|\les 1$ for any $l\in
\N$ and
\[|\tilde h(\nu,r)|\les \bigg(\frac{r^{\frac{K}{2}+\frac{1}{4}}}{(r-\nu)^{\frac{3K}{2}+7/4}}+\frac{1}{r}\bigg)1_{[\nu+\nu^{1/3},2\nu]}(r)+r^{-1}1_{[2\nu,\infty)}(r).\]
Moreover, if $\nu\in \Z$, we have better estimate
\[|\tilde h(\nu,r)|\les \frac{r^{\frac{K}{2}+\frac{1}{4}}}{(r-\nu)^{\frac{3K}{2}+7/4}}1_{[\nu+\nu^{1/3},2\nu]}(r)+r^{-3/2}1_{[2\nu,\infty)}(r).\]

\end{lem}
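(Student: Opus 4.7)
The plan is to apply Theorem~\ref{thm1} to the Schl\"afli representation \eqref{eq:Besselint}, $J_\nu(r) = J_\nu^M(r) - J_\nu^E(r)$. The exponentially decaying piece is handled by the elementary inequality $\nu\tau + r\sinh\tau \geq (r+\nu)\tau$, giving $|J_\nu^E(r)| \leq |\sin(\nu\pi)|/(\pi(r+\nu)) \lesssim 1/r$; this vanishes when $\nu\in\Z$ and accounts for the isolated $1/r$ term in the error.

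For $J_\nu^M$, the phase $\phi(x) = r\sin x - \nu x$ has critical points at $\pm x_0$, $x_0 = \arccos(\nu/r)$, with $\phi(\pm x_0) = \pm(\sqrt{r^2-\nu^2} - \nu x_0)$ and $\phi''(\pm x_0) = \mp\sqrt{r^2-\nu^2}$. Using a smooth partition $1 = \chi_+ + \chi_- + \psi_b$ with $\chi_\pm$ supported in $|x\mp x_0| \leq b$, $b = c_0\min(x_0,1)$, I split $2\pi J_\nu^M = I_+ + I_- + I_b$. In the stationary pieces the substitution $x = \pm x_0 + bw$ produces the rescaled phase $\tilde\phi_b(w) := (\phi(\pm x_0 + bw) - \phi(\pm x_0))/(\mp r\sin x_0 \cdot b^2)$, which satisfies $\tilde\phi_b(0) = \tilde\phi_b'(0) = 0$, $\tilde\phi_b''(0) = 1$ and $\tilde\phi_b''(w) \in [9/10,11/10]$ on $|w|\leq 1$ for $c_0$ small; thus (H1)(H2) of Theorem~\ref{thm1} hold with effective parameter $\lambda_{\rm eff} = r\sin x_0 \cdot b^2$. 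Applying Theorem~\ref{thm1}(2) at each critical point (taking the complex conjugate at $+x_0$ to accommodate $\phi''(x_0) < 0$) and summing yields the main term $(2\pi)^{-1/2}(r^2-\nu^2)^{-1/4}(e^{i\theta(r)} + e^{-i\theta(r)})$ after dividing by $2\pi$. The remainder $|R| \lesssim b\,\lambda_{\rm eff}^{-3/2}$, together with $\lambda_{\rm eff} = (r^2-\nu^2)^{1/2}b^2$, $b\sim x_0$, $x_0^2 \sim (r-\nu)/r$, simplifies to $\lesssim r/((r-\nu)^{7/4}(r+\nu)^{3/4}) \sim \nu^2/(r^2-\nu^2)^{7/4}$ in the transitive region and $\lesssim r^{-3/2}$ for $r\geq 2\nu$.

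For the bulk $I_b$ I integrate by parts twice with $T\psi = (\psi/(i\phi'))'$. The first boundary term is $[\psi_b e^{i\phi}/(i\phi')]_{-\pi}^\pi = 2\sin(\nu\pi)/(r+\nu) \lesssim 1/r$ (and zero when $\nu\in\Z$), while the second vanishes because $\phi''(\pm\pi) = -r\sin(\pm\pi) = 0$ and $\psi_b'(\pm\pi) = 0$. For the remainder $\int e^{i\phi}T^2\psi_b\,dx$, the bound $|T^2\psi_b| \lesssim |\psi_b''|/|\phi'|^2 + |\psi_b'||\phi''|/|\phi'|^3 + |\psi_b||\phi'''|/|\phi'|^3 + |\psi_b||\phi''|^2/|\phi'|^4$, combined with $|\phi^{(k)}|\lesssim r$, the lower bound $|\phi'(x)| \gtrsim r\sin x_0 \cdot |x - x_0|$ near $x_0$ (and $|\phi'| \gtrsim r$ elsewhere), and $|\psi_b^{(k)}|\lesssim x_0^{-k}$ on the transition of width $\sim x_0$, makes each term $\lesssim \sqrt{r}/(r-\nu)^{5/2}$. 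The hypothesis $r-\nu \geq \nu^{1/3}$ then forces $\sqrt{r}/(r-\nu)^{5/2} \leq \nu^{1/4}/(r-\nu)^{7/4} \sim \nu^2/(r^2-\nu^2)^{7/4}$, so the bulk is absorbed into the target error; for $\nu\in\Z$ the boundary term and $J_\nu^E$ both vanish, yielding the improved estimate.

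For part (2), I replace Theorem~\ref{thm1}(2) by Theorem~\ref{thm1}(3), producing $R(\lambda_{\rm eff}) = (2\pi)^{1/2}\lambda_{\rm eff}^{-1/2}e^{\mp i\pi/4}\sum_{k=1}^K a_k(x_0) \lambda_{\rm eff}^{-k}/k! + \tilde R_K$ with $|\tilde R_K| \lesssim \lambda_{\rm eff}^{-K-3/2}$; rewriting $\lambda_{\rm eff}^{-k-1/2} \sim x_0(rx_0^3)^{-k-1/2}$ puts the expansion into the form stated in the lemma, and $b\,\lambda_{\rm eff}^{-K-3/2} \sim r^{K/2+1/4}/(r-\nu)^{3K/2+7/4}$ by the same algebra as in part (1). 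The $r\geq 2\nu$ regime is analogous, with $b\sim 1$ and all estimates becoming routine since $\sin x_0 \sim 1$. The main obstacle is verifying $|\partial^l a_k(x_0)| \lesssim 1$ uniformly in $x_0$: the coefficients $a_k = 2^{-k}i^k\partial_w^{2k}[\chi(w) w_y(w)]\big|_{w=0}$ of Theorem~\ref{thm1}(3) depend on $x_0$ through the implicit change of variable defined by $\tilde\phi_b$, whose $w$-derivatives introduce $1/\sin x_0$ singularities; one must check by careful induction that these are neutralized by the rescaling factor $b\sim x_0$ so that all $x_0$-derivatives of $a_k$ remain uniformly bounded.
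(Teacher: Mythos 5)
Your overall strategy matches the paper's: split off $J_\nu^E$ via $\nu\tau + r\sinh\tau \geq (r+\nu)\tau$, decompose $J_\nu^M$ into two stationary pieces near $\pm x_0$ and a bulk, rescale each stationary piece by the cutoff width to obtain an effective parameter $\lambda_{\rm eff}\sim rx_0^3$, and apply Theorem~\ref{thm1} (parts (2) and (3)) to the rescaled phase. The main stylistic difference is that you use a single cutoff scale $b=c_0\min(x_0,1)$, whereas the paper treats $r\geq 2\nu$ ($b\sim 1$) and $r<2\nu$ ($b\sim x_0$) as separate cases, and in the latter case it further subdivides the bulk with a cutoff $\eta$ so the region near $\pm\pi$ (where $\phi'\sim -r$) and the region between (where $|\phi'|\gtrsim x_0^2$) can each be handled cleanly by their own integration-by-parts bound. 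Both decompositions yield the same part (1) estimate.

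There is, however, a genuine gap in your treatment of the bulk in part (2). You integrate by parts twice and obtain $|I_b^{\rm int}|\lesssim \sqrt{r}/(r-\nu)^{5/2}\sim x_0(rx_0^3)^{-2}$. For part (1) this suffices, since $x_0(rx_0^3)^{-2}\lesssim x_0(rx_0^3)^{-3/2}\sim \nu^2/(r^2-\nu^2)^{7/4}$ when $rx_0^3\gtrsim 1$. But the target remainder in part (2) is $\sim x_0(rx_0^3)^{-K-3/2} + 1/r$, and $x_0(rx_0^3)^{-2}$ is \emph{larger} than $x_0(rx_0^3)^{-K-3/2}$ whenever $K\geq 1$ and $rx_0^3\gg 1$. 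One checks that the $1/r$ term does not rescue you either: $\sqrt{r}/(r-\nu)^{5/2}\lesssim 1/r$ needs $r-\nu\gtrsim r^{3/5}$, while $\sqrt{r}/(r-\nu)^{5/2}\lesssim r^{K/2+1/4}/(r-\nu)^{3K/2+7/4}$ needs $r-\nu\lesssim r^{1/3}$; in the intermediate regime $r^{1/3}\ll r-\nu\ll r^{3/5}$, two integrations by parts are insufficient. The paper avoids this by integrating by parts on the bulk as many times as needed (at least $K+2$ times), giving $x_0(rx_0^3)^{-M}$ with $M\geq K+2$, which is then absorbed. Your proof should do the same for part (2). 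Finally, you are right to flag the uniform smoothness in $x_0$ of the coefficients $a_k$ as the delicate point; the paper's proof is also somewhat terse there, so this is a shared issue rather than one specific to your argument.
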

\begin{proof}
Part (1) was given in \cite{BC} without a proof.   Here we give a proof by Theorem
\ref{thm1}. If $\nu\in \Z$, then $J_\nu^E(r)=0$. Thus it
suffices to consider $J_\nu^M(r)$. Denote $\phi(x)=\sin
x-\frac{\nu}{r}x$. Let $\phi'(x)=\cos x-\frac{\nu}{r}=0$, then we
find two solutions $x=\pm x_0=\pm \arccos \frac{\nu}{r}$. Since
$\nu<r$, we get $x_0\sim \frac{\sqrt{r^2-\nu^2}}{r}<1$. We divide
the proof into two cases.

{\bf Case 1.} $r\geq 2\nu$.

In this case we have $x_0\sim 1$. Let $\beta(x)$ be a cutoff
function around $0$ and supported in $\{|x|\ll 1\}$. Let $\tilde
\beta=1-\beta(x-x_0)-\beta(x+x_0)$. Then
\[J_\nu^M(r)=\frac{1}{2\pi}\int_{-\pi}^\pi e^{ir\phi(x)}[\beta(x-x_0)+\beta(x+x_0)+\tilde \beta(x)]dx:=I_1+I_2+I_3.\]
First, we estimate the term $I_3$. Since $|\phi'(x)|\sim 1$ in
$\supp \tilde\beta$, integrating by part we get that
\begin{align*}
|I_3|\les |\int_{-\pi}^\pi \frac{\p_x[e^{ir\phi(x)}]}{ir\phi'(x)}
\tilde\beta(x)dx|\les r^{-1}.
\end{align*}
If $\nu\in \Z$, we can do better since the boundary term vanishes.
Indeed, in this case from the fact that
$e^{ir\phi(\pi)}=e^{ir\phi(-\pi)}$, $\phi'(\pi)=\phi'(-\pi)$,
$\tilde\beta(\pi)=\tilde\beta(-\pi)$, we can get
\begin{align*}
|I_3|\les r^{-2}, \quad \mbox{ if } \nu\in \Z.
\end{align*}
Now we consider the term $I_1$. We have
\[I_1=\frac{1}{2\pi}\int e^{ir\phi(x+x_0)}\beta(x)dx.\]
It is easy to check that $\phi(x+x_0)-\phi(x_0),\beta$ satisfy the
conditions in Theorem \ref{thm1}. Thus by Theorem \ref{thm1} we get
\[I_1=\frac{1}{\sqrt{2\pi}}\frac{e^{i\theta(r)}}{(r^2-\nu^2)^{1/4}}+R_1(\nu,r)\]
with $|R_1|\les r^{-3/2}$. Similarly, for $I_2$ we have
\[I_2=\frac{1}{\sqrt{2\pi}}\frac{-e^{i\theta(r)}}{(r^2-\nu^2)^{1/4}}+R_2(\nu,r)\]
with $|R_2|\les r^{-3/2}$. Therefore, we prove part (1) by setting
$h=R_1+R_2+I_3+J_\nu^E$.

{\bf Case 2.} $r<2\nu$.

Let $\gamma=1-\beta(\frac{x-x_0}{x_0})-\beta(\frac{x+x_0}{x_0})$.
Then
\[J_\nu^M(r)=\frac{1}{2\pi}\int_{-\pi}^\pi e^{ir\phi(x)}[\beta(\frac{x-x_0}{x_0})+\beta(\frac{x+x_0}{x_0})+\gamma(x)]dx:=II_1+II_2+II_3.\]
First, we estimate the term $I_1$. We have
\begin{align*}
II_1=\frac{1}{2\pi}\int
e^{ir\phi(x)}\beta(\frac{x-x_0}{x_0})dx=\frac{x_0}{2\pi}\int
e^{irx_0^3\cdot x_0^{-3}\phi(x_0x+x_0)}\beta(x)dx.
\end{align*}
By the condition $r>\nu+\nu^{1/3}$ we get $rx_0^3\ges 1$. Let
$\tilde\phi(x)=x_0^{-3}[\phi(x_0x+x_0)-\phi(x_0)]$. By the mean
value formula we can verify the conditions in Theorem \ref{thm1} for
$\tilde\phi(x),\beta$. Thus by Theorem \ref{thm1} we get
\[II_1=\frac{1}{\sqrt{2\pi}}\frac{e^{i\theta(r)}}{(r^2-\nu^2)^{1/4}}+\tilde R_1\]
with $|\tilde R_1|\les x_0(rx_0^3)^{-3/2}\les
\frac{\nu^2}{(r^2-\nu^2)^{7/4}}$. Similarly, for $II_2$ we get
\[II_2=\frac{1}{\sqrt{2\pi}}\frac{-e^{i\theta(r)}}{(r^2-\nu^2)^{1/4}}+\tilde R_2\]
with $|\tilde R_2|\les x_0(rx_0^3)^{-3/2}\les
\frac{\nu^2}{(r^2-\nu^2)^{7/4}}$.

Now we estimate the term $II_3$. We have
\[II_3=\frac{1}{2\pi}\int
e^{ir\phi(x)}\eta(x)\gamma(x)dx+\frac{1}{2\pi}\int_{-\pi}^{\pi}
e^{ir\phi(x)}(1-\eta(x))dx:=II_3^1+II_3^2.\] For the term $II_3^1$,
it's easy to see that $|\phi'(x)|\ges x_0^2,\,
|\p_x^k(\frac{1}{\phi'(x)})|\les x_0^{-2-k}$, $\forall k\in \N$ for
$x\in \supp (\eta\gamma)$, then integrating by parts we get that
\[|II_3^1|\les r^{-K}\bigg|\int
e^{ir\phi(x)}(\p_x
\frac{1}{\phi'(x)})^K\big[\eta(x)\gamma(x)\big]dx\bigg|\les x_0
(rx_0^3)^{-K}\les \frac{\nu^2}{(r^2-\nu^2)^{7/4}}.\] For the term
$II_3^2$, we have $|\phi'(x)|\sim 1$ for $x\in \supp(1-\eta)$. Thus
we get $|II_3^2|\les r^{-1}$ using integration by parts. If $\nu\in
\Z$, as in case 1, the boundary value vanishes, and we get
$|II_3^2|\les r^{-2}$. Thus we prove part (1) by setting $h=\tilde
R_1+\tilde R_2+II_3+J_\nu^E$.

Now we prove part (2). We only need to consider $\tilde R_1,\tilde
R_2$ in case 2. By Theorem \ref{thm1} we have
\begin{align*}
\tilde
R_1=&x_0(2\pi)^{-1/2}e^{ir\phi(x_0)}(rx_0^3)^{-1/2}e^{i\pi/4}\sum_{k=1}^K\frac{a_k(rx_0^3)^{-k}}{k!}+x_0O((rx_0^3)^{-K-3/2})\\
=&(2\pi)^{-1/2}e^{i\theta(r)}x_0\sum_{k=1}^K\frac{a_k
(rx_0^3)^{-k-1/2}}{k!}+x_0O((rx_0^3)^{-K-3/2}).
\end{align*}
We can obtain the expansion for $\tilde R_2$ similarly. We complete
the proof.
\end{proof}

\subsection{Dispersive estimates}
We consider the dispersive equation
\EQ{
i\partial_t u+\omega(-i\nabla) u=0, \quad u(0)=\phi
}
where $\omega: \R^d\to \R$ and $\omega(-i\nabla) =\ft^{-1}\omega(\xi)\ft$.   Using the Fourier transform,  we have \[u=S(t)\phi:=C\int_{\R^d}e^{ix\xi+it\omega(\xi)}\hat f(\xi)d\xi.\] 
Let $\chi$ be a smooth cutoff function adapted to $[1,2]$ and define $\dot P_k=\ft^{-1}\chi(\frac{|\xi|}{2^k})\ft$.
The dispersive estimate of the following type
\EQ{\label{eq:dispersive}
\norm{S(t)\dot P_k f}_{L^\infty}\leq C(k)|t|^{-\theta}\norm{f}_{L^1},
}
has been playing a fundamental role in the study of the (nonlinear) dispersive equation, e.g.  Strichartz estimates,  well-posedness and asymptotic behaviour.    When $\omega$ is radial,  the dispersive estimate \eqref{eq:dispersive} was systematically studied in \cite{decay} under some assumptions of $\omega$ and its derivatives at $0$ and $\infty$.

In this paper we try to continue \cite{decay} by allowing more general dispersion $\omega$.   By the Young inequality,  \eqref{eq:dispersive} can be derived from the following oscillatory integral estimate
\EQ{
\left|\int_{\R^d}e^{it\big(\omega(2^k\xi)+\frac{2^kx\xi}{t}\big)} \chi(\xi)d\xi\right|\leq C(k)2^{-kd} |t|^{-\theta},
}
which involves a phase function depending on $x,k$ and $t$.   
Let $$\Phi(\xi)=\Phi_{x,k,t}(\xi)=2^{k\theta}\brk{\omega(2^k\xi)+\frac{2^kx\xi}{t}}$$ for $\theta\in \R$.   Note that
\EQ{
\nabla \Phi(\xi)=2^{k\theta}\brk{2^k \nabla\omega(2^k\xi)+\frac{2^kx}{t}},\quad
\hess \Phi(\xi)=2^{k\theta}2^{2k}\brk{\omega_{ij}(2^k\xi)}.
}
If we can chose $\theta$ such that $\Phi$ satisfies the conditions in Theorem \ref{thm2},  then we can obtain the following estimate 
\EQ{
\left|\int_{\R^d}e^{it\big(\omega(2^k\xi)+\frac{2^kx\xi}{t}\big)} \chi(\xi)d\xi\right|\le 2^{-k\theta d/2}|t|^{-d/2}.
}
In particular, 
when $\omega(\xi)=h(|\xi|)$ is a radial function,  then
\EQ{
\omega_{ij}(\xi)=\brk{h''(|\xi|)-\frac{h'(|\xi|)}{|\xi|}}\frac{\xi_i\xi_j}{|\xi|^2}+\frac{h'(|\xi|)}{|\xi|}\delta_{ij}.
}
We have
\[\det (w_{ij})=h''(|\xi|), \quad \det \hess \Phi(\xi)=2^{k\theta d}2^{2kd}h''(2^k|\xi|).\]
When $h''$ is non-zero,  this was handled in \cite{decay}.   Here we give some examples that $h''$ may have zeros. 

\begin{ex} $\omega(\xi)=h(|\xi|)=\sqrt{|\xi|+|\xi|^3}$,  $\xi\in \R^d$.   This appears in the water-wave system,  see \cite{DIPP}.

Note that 
\EQ{
h'(r)=&\frac{1+3r^2}{2\sqrt{r+r^3}}\\
h''(r)=&\frac{3(r^2+1)^2-4}{4(r(1+r^2))^{3/2}}.
}
Then $h''(r)=0$ has a unique solution at $r=r_0=\sqrt{\frac{2}{\sqrt{3}}-1}$.   So the main contribution of the dispersive estimate is from the frequency of size $r_0$.   Therefore
\EQN{
\int_{\R^d}e^{it\brk{\omega(\xi)+\frac{x\xi}{t}}}\chi(\xi)d\xi
=&\int_{||\xi|-r_0|\ges \delta} e^{it\brk{\omega(\xi)+\frac{x\xi}{t}}}\chi(\xi)d\xi+\int_{||\xi|-r_0|\les \delta} e^{it\brk{\omega(\xi)+\frac{x\xi}{t}}}\chi(\xi)d\xi\\
:=& I+II.
}
We may assume $|x|\ges |t|$.
For the term $I$,  since $|h''(r)|\ges \delta$ for $|r-r_0|\ges \delta$,  then using polar coordinates and Theorem \ref{thm1} we get
\EQ{
|I|\les& \aabs{\int_{||\xi|-r_0|\ges \delta} e^{it\brk{\omega(\xi)+\frac{x\xi}{t}}}\chi(\xi)d\xi}\\
\les& \aabs{\int_{|\rho-r_0|\ges \delta} e^{ith(\rho)}\chi(\rho) \rho^{d-1} (\rho |x|)^{-\frac{d-2}{2}}J_{\frac{d-2}{2}}(\rho |x|)d\rho}\\
\les& (|t|\delta)^{-1/2}|t|^{-\frac{d-1}{2}}\les |t|^{-d/2}\delta^{-1/2}.
}
For the term $II$,  using polar coordinates and decay of the Bessel functions,  we get
\EQ{
|II| \les& \aabs{\int_{|\rho-r_0|< \delta} e^{ith(\rho)}\chi(\rho) \rho^{d-1} (\rho |x|)^{-\frac{d-2}{2}}J_{\frac{d-2}{2}}(\rho |x|)d\rho}\\
\les&\delta |t|^{-\frac{d-1}{2}}.
}
Optimising $\delta$ by taking $\delta=t^{-1/3}$,  we obtain
\EQ{
\aabs{\int_{\R^d}e^{it\brk{\omega(\xi)+\frac{x\xi}{t}}}\chi(\xi)d\xi}\les  |t|^{-\frac{d-1}{2}-\frac{1}{3}}.
}
\end{ex}

\begin{ex}
$\omega(\xi)=h_2(|\xi|)=|\xi|\sqrt{\frac{2+|\xi|^2}{1+|\xi|^2}}$.  This appears in the Euler-Poisson system (see \cite{GP}).   Similarly, $h_2''(r)=$ has a unique solution at $r=r_0=\sqrt{1+\sqrt{7}}$.
The rest follows in the same way as Example 1 and we omit the details. 
\end{ex}

\section*{Acknowledgement}
The author thanks Xiaolong Han and Melissa Tacy for the discussions and comments that improve the writing of this paper.

\end{document}